\documentclass[11pt]{article}
\setlength{\skip\footins}{0.6cm}

\usepackage[utf8]{inputenc}
\usepackage{lmodern}
\usepackage{subfiles}
\usepackage{enumitem}
\setenumerate{topsep=6pt,ref={\normalfont(\roman*)},label={\normalfont(\roman*)}, itemsep=0em} 
        
\usepackage{amsfonts}
\usepackage{amsthm}
\usepackage{amsmath}
\usepackage{amssymb}
\usepackage{amscd}
\usepackage{mathrsfs}
\usepackage{mathtools}
\usepackage{bbm}
\usepackage{esint}

\usepackage[margin=3cm]{geometry}
\usepackage{setspace}
\usepackage{indentfirst}
\usepackage{graphicx}
\usepackage{graphics}
\usepackage{lscape}
\usepackage{pgf,tikz}
\usepackage{tikz-cd}
\usepackage{color}
\usepackage{pict2e}
\usepackage{epic}
\usepackage{epstopdf}
\usepackage{titlesec, titlefoot}
	\titleformat{\section}[block]{\Large\bfseries\filcenter}{\thesection}{1em}{}
\usepackage{commath}
\usepackage{float}
\usepackage{caption}
\usepackage{etoolbox}
\usepackage[affil-it]{authblk}
\usepackage{combelow}

\usepackage[hidelinks, bookmarksdepth=3]{hyperref}
\hypersetup{bookmarksopen=true} 
\usepackage{hypcap}

\graphicspath{{./Pictures/}}
\allowdisplaybreaks

\expandafter\def\expandafter\normalsize\expandafter{%
    \normalsize
    \setlength\abovedisplayskip{6pt}
    \setlength\belowdisplayskip{6pt}
    \setlength\abovedisplayshortskip{6pt}
    \setlength\belowdisplayshortskip{6pt}
}



\theoremstyle{plain}

\renewcommand*\thesection{\arabic{section}}
\numberwithin{equation}{section} 

\theoremstyle{plain}
\newtheorem{thm}{Theorem}
\newtheorem{lemma}[thm]{Lemma}
\newtheorem{prop}[thm]{Proposition}
\newtheorem{cor}[thm]{Corollary}
\numberwithin{thm}{section} 

\theoremstyle{definition}

\newtheorem{remark}[thm]{Remark}
\newtheorem{conj}[thm]{Conjecture}

\newcommand{\thistheoremname}{}
\newtheorem{genericthm}[thm]{\thistheoremname}

\newcommand{\thistheoremnames}{}
\newtheorem*{genericthms}{\thistheoremnames}
\newenvironment{para*}[1]
  {\renewcommand{\thistheoremnames}{#1}%
   \begin{genericthms}}
  {\end{genericthms}}

\expandafter\let\expandafter\oldproof\csname\string\proof\endcsname
\let\oldendproof\endproof
\renewenvironment{proof}[1][\proofname]{%
  \oldproof[\upshape \bfseries #1]%
}{\oldendproof}

\makeatletter
\def\@makechapterhead#1{%
  \vspace*{50\p@}%
  {\parindent \z@ \raggedright \normalfont
    \interlinepenalty\@M
    \Huge\bfseries  \thechapter.\quad #1\par\nobreak
    \vskip 40\p@
  }}
\makeatother

%


\def \a{\alpha}
\def \R {\mathbb{R}}

\def \D{\textup{D}}

\def \e{\varepsilon}
\def \d{\,\textup{d}}

\def \exc{\backslash}
\def \p{\partial}

\def \mb{\mathbb}

\def \tp{\textup}
\def \Id{\textup{Id}}
\def \id{\textup{id}}

\begin{document}

\title{\textbf{Automatic quasiconvexity of homogeneous isotropic rank-one convex integrands}}

\author[1]{{\Large Andr\'e Guerra}}
\author[2]{{\Large Jan Kristensen}}

\affil[1]{\small School of Mathematics, Institute for Advanced Study, 1 Einstein Dr., Princeton, NJ, 08540, USA 
\protect \\
  {\tt{aguerra@math.ias.edu}}
  \vspace{1em} \ }

\affil[2]{\small 
University of Oxford, Andrew Wiles Building Woodstock Rd, Oxford OX2 6GG, United Kingdom 
\protect\\
  {\tt{jan.kristensen@maths.ox.ac.uk}} \ }

\date{}

\maketitle

\begin{abstract}
We consider the class of non-negative rank-one convex isotropic integrands on $\R^{n\times n}$ which are also positively $p$-homogeneous. If $p\leq n=2$ we prove, conditional on the quasiconvexity of the Burkholder integrand, that the integrands in this class are quasiconvex at conformal matrices. If $p\geq n =2$, we show that the positive part of the Burkholder integrand is polyconvex. In general, for $p\geq n$, we prove that the integrands in the above class are polyconvex at conformal matrices. Several examples imply that our results are all nearly optimal.
\end{abstract}

\unmarkedfntext{
\hspace{-0.74cm}
\emph{Acknowledgments.} A.G. was supported by the Infosys Membership at the Institute for Advanced Study. It is a pleasure to thank Kari Astala and Daniel Faraco for interesting discussions.
}

\section{Introduction}

For $n,m\geq 2$, let $\Omega\subset \R^n$  be a bounded domain and consider the functional
$$\mathscr F[u]\equiv \int_{\Omega} F(\D u)\d x, \qquad u\colon \Omega\to \R^m.$$
The integrand $F\colon \R^{m\times n}\to \R$ is assumed to be Borel measurable and locally bounded. 
A classical problem in the Calculus of Variations is to minimize $\mathscr F$ over Dirichlet classes $W^{1,p}_g(\Omega,\R^m)\equiv g+W^{1,p}_0(\Omega,\R^m)$, where $g\in W^{1,p}(\R^n,\R^m)$ is a given boundary datum. 
The only systematic way of solving this minimization problem is through the Direct Method, which requires that $\mathscr F$ be both \textit{sequentially weakly lower semicontinuous} and \textit{coercive} in $W^{1,p}_g(\Omega)$. 

The natural condition on $F$ which ensures that the Direct Method is applicable is quasiconvexity, as introduced by Morrey \cite{Morrey1952}. We say that $F$ is \textit{quasiconvex} at $A\in \R^{m\times n}$ if
\begin{equation*}
\label{eq:quasiconvexity}
F(A)\leq \frac{1}{\mathscr L^n(\Omega)}\int_{\Omega} F(A+\D\varphi) \d x \qquad \tp{ for all } \varphi \in W^{1,\infty}_0(\Omega,\R^m).
\end{equation*}
In fact, if $F$ satisfies the natural growth condition
\begin{equation*}
\label{eq:pgrowth}
|F(A)|\leq C (|A|^p+1) \quad \text{for all } A\in \R^{m\times n},
\end{equation*}
then $\mathscr F$ is sequentially weakly lower semicontinuous in $W^{1,p}_g(\Omega,\R^m)$ if and only if $F$ is quasiconvex \cite{Acerbi1984,Meyers1965,Morrey1952}, and $\mathscr F$ is coercive in $W^{1,p}_g(\Omega,\R^m)$ if and only if there is $c>0$ and $A_0\in \R^{m\times n}$ such that $F-c|\cdot|^p$ is quasiconvex at $A_0$ \cite{Chen2017}.

Quasiconvexity has been mostly studied in relation to two competing semiconvexity notions, polyconvexity and rank-one convexity: an integrand $F\colon \R^{m\times n}\to \R$ is
\begin{enumerate}
\item \textit{polyconvex} at $A\in \R^{m\times n}$ if there is a convex $G\colon \R^{\tau(m,n)}\to \R$ such that 
$F(A)=G( M(A))$ and $F\geq G\circ M,$
where $M(A)$ is the vector of minors of $A$, with  length $\tau(m,n)$;
\item \textit{rank-one convex} if $t\mapsto F(A+tX)$ is convex for all $A,X\in \R^{m\times n}$ such that $\tp{rank}(X)=1$.
\end{enumerate}It is well-known that 
$$\textup{polyconvexity} \implies \textup{quasiconvexity} \implies 
\textup{rank-one convexity},$$
although in general none of the implications can be reversed \cite{Dacorogna2007,Muller1999a}. Nonetheless, there are very few examples of rank-one convex non-quasiconvex integrands: the first such example was constructed by \v Sver\'ak \cite{Sverak1992a}, with $m\geq 3, n\geq 2$, and more recently another example was found by Grabovsky \cite{Grabovsky2018}, with $m\geq 8, n\geq 2$. In particular, there are no such examples when $m=2$, and there is some evidence that in this case rank-one convexity may imply quasiconvexity \cite{Astala2012, Faraco2008,GuerraCosta2020,
  Harris2018,Kirchheim2008, Muller1999b,Muller2000,
  Pedregal1996,Pedregal1998,Sebestyen2017}. In this paper we will specially focus on the case $m=n$, but see also Remark \ref{rmk:rectangular} for the general case.

A separate but related problem is whether there are structural conditions on rank-one convex integrands which ensure quasiconvexity. An interesting structural condition is that of \textit{positive $p$-homogeneity}, which is to say that
\begin{equation}
\label{eq:hom}
F(tA)=t^p F(A) \quad \text{for all }A\in \R^{n\times n} \text{ and all } t>0.
\end{equation}
The second author showed, in joint work with Kirchheim \cite{Kirchheim2016}, that a positively $1$-homogeneous rank-one convex integrand is necessarily \textit{convex} at all matrices with rank at most one. Moreover, this result is sharp in the sense that there exist integrands with those properties which are not convex at matrices with rank two. We refer the reader to \cite{Dacorogna2008,Muller1992} for related results.

In this paper we investigate whether a result with a similar flavour exists for $p>1$. Positive homogeneity is not per se enough to upgrade rank-one convexity to quasiconvexity: for instance,  Grabovsky’s example is positively 2-homogeneous. It is for this reason that we will restrict our attention to integrands which are  \textit{isotropic}, in the sense that
\begin{equation}
\label{eq:isotropy}
F(QAR)=F(A) \quad 
\tp{ for all } A\in \R^{n\times n} \tp{ and all } Q, R\in \tp{SO}(n).
\end{equation}
Isotropy is a natural condition often satisfied by the integrands of interest in Nonlinear Elasticity \cite{Ball1977,Ball1982} and in Geometric Function Theory \cite{Iwaniec2001}; moreover, neither \v Sver\'ak's nor Grabovsky's examples are isotropic.  

Zero is a distinguished point for positively homogeneous integrands. In this paper we will see that, for isotropic integrands, \textit{conformal matrices} are also distinguished. We define
$$\textup{CO}(n)\equiv \{tQ:Q\in \tp{O}(n), t\in \R\},$$
which splits into the sets of conformal and anti-conformal matrices:
\begin{equation}
\label{eq:conformalmatrices}
\textup{CO}^+(n)\equiv \tp{CO}(n)\cap\{\det\geq 0\}, \qquad \textup{CO}^-(n)\equiv \tp{CO}(n)\cap\{\det\leq 0\}.
\end{equation}
We note that the sets $\textup{CO}^\pm(n)$ are \textit{nonlinear} cones, except when $n=2$.

Arguably the most important rank-one convex integrand satisfying both \eqref{eq:hom} and \eqref{eq:isotropy} is the \textit{Burkholder integrand}, $B_p\colon \R^{n\times n}\to \R$, defined by 
\begin{equation}
\label{eq:defBp}
B_p(A)=\left(\Big|1-\frac np\Big| |A|^n + \det A\right)|A|^{p-n}, \qquad p\geq \frac n 2.
\end{equation}
Here and throughout the paper we write $|A|$ for the operator norm of $A\in \R^{n\times n}$.
Observe that $B_n(A)=\det A$; for a general $p$, $B_p$ can be thought of as a positively $p$-homogeneous version of the determinant.
The two-dimensional version of $B_p$ was found by Burkholder \cite{Burkholder1984,Burkholder1988} in the context of martingale theory and the generalization to higher dimensions that we consider here is due to Iwaniec \cite{Iwaniec2002}. A related example was found by Dacorogna and Marcellini \cite{Alibert1992,Dacorogna1988}, see also \cite{Iwaniec2005}. The following is a long-standing open problem:

\begin{conj}[Iwaniec]
\label{conj:Iwaniec}
The integrand $B_p\colon \R^{2\times 2}\to \R$ is quasiconvex.
\end{conj}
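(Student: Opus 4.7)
The conjecture is famously open, so what follows is a plan of attack rather than a complete proof. By the positive $p$-homogeneity \eqref{eq:hom} and isotropy \eqref{eq:isotropy}, together with the singular value decomposition, it suffices to verify the quasiconvexity inequality at diagonal matrices $A=\tp{diag}(1,k)$ with $k\in[-1,1]$. The only completely elementary case is $p=2$, where $B_2=\det$ is a null Lagrangian and the inequality is in fact an identity. The conformal endpoint $k=1$ is covered by the first main theorem of this paper, but only conditional on the present conjecture, so it sheds no light; the substance is thus the interior range $k\in(-1,1)$ with $p\neq 2$.

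The natural next step is a Bellman-function argument in Burkholder's spirit: build $U\colon\R^{2\times 2}\to\R$ with $U\geq B_p$, $U(A)=B_p(A)$ at the test matrix, and admitting a decomposition $U=L+R$ where $L$ is a null Lagrangian and $R\geq 0$ pointwise. Since $\int_\Omega L(A+\D\varphi)\d x=L(A)\mathscr L^2(\Omega)$ for $\varphi\in W^{1,\infty}_0(\Omega,\R^2)$, quasiconvexity would then follow from pointwise positivity of $R$. One would exploit the $n=2$ complex structure by writing $\D f=\p f+\bar{\p}f$ and invoking the Beurling--Ahlfors transform $S$, defined by $\bar{\p}f=S\p f$, to engineer such a splitting, since $B_p(\D f)$ admits a clean expression in terms of $\p f$ and $S\p f$ that is unavailable in higher dimensions.

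The principal obstacle is that Conjecture~\ref{conj:Iwaniec} (even only at $A=0$) is known to be equivalent to Iwaniec's sharp $L^p$-norm bound $\|S\|_{p\to p}=p^\ast-1$ for the Beurling--Ahlfors transform, which has resisted all attempts for over two decades; the best unconditional result is essentially $\|S\|_{p\to p}\leq 2(p^\ast-1)$ via the Ba\~{n}uelos--Wang and Nazarov--Volberg heat-flow Bellman methods. Hence no null-Lagrangian decomposition of the above form is currently known to exist, and convex-integration techniques only probe the rank-one convex envelope, which for $B_p$ coincides with $B_p$ itself and so contributes nothing. A successful proof will almost certainly require a genuinely new ingredient: either a sharper Bellman function encoding the extremals of $B_p$ on a full neighbourhood of $\tp{CO}^+(2)$, or a direct handle on gradient Young measures beyond the laminate class, for instance through stochastic representations of planar quasiconformal maps.
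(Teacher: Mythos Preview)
This statement is labelled as a \emph{conjecture} in the paper, not as a theorem, and the paper does not prove it. Indeed the paper explicitly presents it as ``a long-standing open problem'' and several of the main results (notably Theorem~\ref{thm:automaticqc}) are stated \emph{conditionally} on it. There is therefore no proof in the paper to compare your proposal against.

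You correctly recognise this and offer, instead of a proof, an accurate survey of the known reductions and obstructions: the reduction to diagonal test matrices via isotropy and homogeneity, the Bellman-function / null-Lagrangian strategy, the equivalence with Iwaniec's conjectured sharp $L^p$ bound $\|S\|_{p\to p}=p^\ast-1$ for the Beurling--Ahlfors transform, and the fact that the best unconditional estimates fall short by a factor of roughly two. This is all essentially correct as background, but as you yourself say, it is a plan of attack rather than a proof, and none of the outlined steps can currently be completed. One minor inaccuracy: your claim that ``the conformal endpoint $k=1$ is covered by the first main theorem of this paper'' is circular, since Theorem~\ref{thm:automaticqc} for $p\leq 2$ is itself conditional on Conjecture~\ref{conj:Iwaniec}; you do note this, but it is worth emphasising that the paper provides no unconditional information about quasiconvexity of $B_p$ itself for $p<2$.
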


Quasiconvexity of $B_p$ is closely related to sharp integrability properties of quasiconformal mappings, see for instance \cite{Astala2009,Astala2012,Iwaniec2002} for a wealth of information on this connection. As remarked by the first author in \cite{Guerra2018}, to prove Conjecture \ref{conj:Iwaniec} it suffices to show that $B_p$ is \textit{quasiconvex at a single point}.
After the beautiful results of Astala--Iwaniec--Prause--Saksman \cite{Astala2012}, see also \cite{Banuelos2008}, there is now ample evidence to believe Conjecture \ref{conj:Iwaniec}.

In applications, the integrands of interest are often \textit{non-negative},
c.f.\ \cite{Rosakis1994} for similar considerations, and this is the final structural assumption that we will make.
 By considering the positive part of any rank-one convex integrand, such as
$$B_p^+\equiv \max\{0,B_p\},$$
we obtain plenty of interesting non-negative rank-one convex integrands. Another source of examples are envelopes of distance functions:  one considers
$(\tp{dist}_K^p)^\tp{rc},$
where $K\subset \R^{n\times n}$ is an isotropic cone and the superscript denotes the rank-one convex envelope, see  \cite{Faraco2004,Sverak1991,Zhang1992,Zhang1997b}. Yet other examples are obtained from conformal or nearly-conformal energy functions, as studied in \cite{Iwaniec1993,Iwaniec1996,Muller1999f,Yan1997,Yan1998a,Yan2001a}.

Our main theorem shows that Conjecture \ref{conj:Iwaniec} is the key to establishing quasiconvexity of the class of integrands under consideration:

\begin{thm}
\label{thm:automaticqc}
Let $F\colon \R^{2\times 2}\to[0,+\infty)$ be an isotropic rank-one convex integrand which is positively $p$-homogeneous for some $p>1$. Suppose that 
$$\tp{either } p\leq 2 \text{ and Conjecture \ref{conj:Iwaniec} holds} \qquad \text{or } p\geq 2.$$
Then $F$ is quasiconvex in $\textup{CO}(2)$.
\end{thm}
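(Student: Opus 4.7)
The strategy is to construct, for each $A_0\in\tp{CO}(2)$, an auxiliary integrand $G\colon\R^{2\times 2}\to\R$ that is quasiconvex (in fact polyconvex when $p\geq 2$), satisfies $G\leq F$ pointwise, and agrees with $F$ at $A_0$. Once such a $G$ is available, the desired quasiconvexity of $F$ at $A_0$ is immediate:
\[
F(A_0)=G(A_0)\leq \frac{1}{\mathscr L^n(\Omega)}\int_\Omega G(A_0+\D\varphi)\d x\leq \frac{1}{\mathscr L^n(\Omega)}\int_\Omega F(A_0+\D\varphi)\d x
\]
for every $\varphi\in W^{1,\infty}_0(\Omega,\R^2)$. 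By isotropy, $p$-homogeneity, and $F\geq 0$, it suffices to handle the two normal forms $A_0=\Id$ and $A_0=R:=\tp{diag}(1,-1)$; let $f_\pm\geq 0$ denote the constant (by isotropy) values of $F(A)/|A|^p$ on $\tp{CO}^\pm(2)$.

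The minorant $G$ is built from the Burkholder integrand $B_p$ and its orientation-reversed companion $B_p^-(A)\equiv B_p(RA) = (|1-2/p||A|^2-\det A)|A|^{p-2}$. When $p\geq 2$, the positive parts $B_p^+$ and $(B_p^-)^+$ are polyconvex---the first by another of the paper's main results, the second because $M(RA)=(RA,-\det A)$ is a linear function of the minor vector $M(A)=(A,\det A)$---and a direct computation confirms that
\[
G(A)\equiv \tfrac{p}{2(p-1)}\bigl(f_+\,B_p^+(A)+f_-\,(B_p^-)^+(A)\bigr)
\]
is polyconvex and coincides with $F$ on $\tp{CO}(2)$. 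When $p\leq 2$, the positive parts are no longer polyconvex, but under Conjecture~\ref{conj:Iwaniec} the integrands $B_p$ and $B_p^-$ are themselves quasiconvex; one then chooses $G=\alpha B_p+\beta B_p^-+\gamma|A|^p$ with $\alpha,\beta,\gamma\geq 0$ to enforce the matching on $\tp{CO}(2)$, producing a quasiconvex $G$.

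The technical heart of the argument, and its principal obstacle, is verifying the pointwise inequality $G\leq F$. By isotropy one reduces to diagonal matrices $\tp{diag}(\sigma_1,\sigma_2)$ with $\sigma_1\geq|\sigma_2|$, and by $p$-homogeneity to the one-variable inequality $g(t)\geq \tilde G(t)$ on $[-1,1]$, where $g(t)\equiv F(\tp{diag}(1,t))$ and $\tilde G$ is explicit (piecewise linear and convex with $\tilde G(\pm 1)=f_\pm$ when $p\geq 2$). The structural properties of $g$ at our disposal are: non-negativity, convexity on $\R$ (from rank-one convexity along $e_2\otimes e_2$), the identifications $g(\pm 1)=f_\pm$, and the functional equation $g(t)=|t|^p g(1/t)$ for $t\neq 0$, derived by combining $p$-homogeneity with the symmetry $F(\tp{diag}(\sigma_1,\sigma_2))=F(\tp{diag}(\sigma_2,\sigma_1))$ (itself a consequence of isotropy under rotation by $\pi/2$). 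Neither convexity alone nor the functional equation alone suffices: they must be combined, with the Euler-type slope identities $g'(\pm 1)=\pm (p/2)f_\pm$ (derived from the functional equation, as mean one-sided derivatives in the non-smooth case) providing the boundary slopes that convexity then propagates through the interior, while the functional equation itself transports control of the behavior at $|t|\gg 1$ back to lower bounds on $g$ near the origin. Morally, the goal is a Burkholder-type extremality statement: among all isotropic, non-negative, rank-one convex, $p$-homogeneous integrands with the prescribed values on $\tp{CO}(2)$, the proposed combination of $B_p^+$ and $(B_p^-)^+$ (resp.\ of $B_p$, $B_p^-$, and $|A|^p$) realizes the pointwise minimum, and the case analysis---tangent bounds at $t=\pm 1$, the kinks of $\tilde G$ at $t=\pm(1-2/p)$, and the iteration of the functional equation---is where the bulk of the work lies.
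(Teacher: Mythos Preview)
Your overall strategy---construct a quasiconvex minorant $G$ that touches $F$ at the conformal point---is exactly what the paper does, but the specific minorants you propose are wrong: the pointwise inequality $G\leq F$ that you set out to prove is \emph{false} in both regimes.

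For $p>2$, take $F(A)=(\det^+A)^{p/2}$. This is isotropic, non-negative, polyconvex (hence rank-one convex), positively $p$-homogeneous, and has $f_+=1$, $f_-=0$, so your minorant is $G=\tfrac{p}{2(p-1)}B_p^+$. At $A=\tp{diag}(1,0)$ one finds $F(A)=0$ while $B_p^+(A)=1-\tfrac{2}{p}>0$, so $G(A)>F(A)$. For $1<p<2$, once the matching conditions $G(\Id)=f_+$ and $G(\overline{\Id})=f_-$ are imposed on $G=\alpha B_p+\beta B_p^-+\gamma|\cdot|^p$, the restriction $\tilde G(t)=G(\tp{diag}(1,t))$ on $|t|\leq 1$ collapses to the \emph{secant} line $\tfrac{f_++f_-}{2}+\tfrac{f_+-f_-}{2}\,t$, independently of $\alpha,\beta,\gamma$. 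Convexity of $g$ with $g(\pm1)=f_\pm$ then yields $g\leq\tilde G$ on $[-1,1]$, the \emph{opposite} of what you need. A concrete witness is $F(A)=|A^+|^p$, which is convex, isotropic, $p$-homogeneous, non-negative, and gives $g(t)=((1+t)/2)^p<(1+t)/2=\tilde G(t)$ on $(-1,1)$.

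The fix, which is what the paper does, is to drop the simultaneous matching on all of $\tp{CO}(2)$ and treat $\tp{CO}^+(2)$ and $\tp{CO}^-(2)$ separately. For $p\leq 2$ one takes $G=c\,B_p$ with $c$ chosen so that $G(\Id)=F(\Id)$ \emph{only}; then $\tilde G$ on $[-1,1]$ is the \emph{tangent} line to $g$ at $t=1$ (your Euler identity gives $(p/2)f_+\in\partial g(1)$), and convexity now delivers $g\geq\tilde G$ as required. The paper obtains this minorization via radial stretchings (its extremality theorem), but your convexity-plus-functional-equation toolkit would indeed suffice once the minorant is corrected. For $p\geq 2$ the paper does not use $B_p^+$ at all: the minorant is simply $G(A)=f_+(\det^+A)^{p/2}$, and $G\leq F$ follows directly from the Baker--Ericksen monotonicity established elsewhere in the paper, with no case analysis.
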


The case $p\leq 2$ is a generalization of earlier results in \cite{Astala2015}, while the case $p\geq 2$ is deduced from the following stronger result:

\begin{thm}
\label{thm:automaticpc}
Let $F\colon \R^{n\times n}\to [0,+\infty)$ be an isotropic rank-one convex integrand which is positively $p$-homogeneous for  some $p\geq n$.  $F$ is polyconvex in $\textup{CO}(n)$.
\end{thm}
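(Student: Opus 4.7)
The plan is to exhibit, at each conformal matrix, an explicit polyconvex minorant of $F$ that touches $F$ at that matrix. By isotropy \eqref{eq:isotropy} and $p$-homogeneity \eqref{eq:hom}, it suffices to produce such a minorant at $A_0=\Id$ (the case $A_0\in\textup{CO}^-(n)$ being handled analogously by composing with a reflection $\tp{diag}(1,\dots,1,-1)$). Setting $c:=F(\Id)\geq 0$, the natural candidate is
$$G(A):=c\,(\det A)_+^{p/n}.$$
Because $p\geq n$, the scalar map $t\mapsto t_+^{p/n}$ is convex on $\R$; hence $G$ is a convex function of the single minor $\det A$ and is thus polyconvex. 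Moreover $G(\Id)=c=F(\Id)$, so the theorem will follow once we verify $G\leq F$ pointwise.

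The inequality $G\leq F$ is immediate where $\det A\leq 0$, since $F\geq 0=G$ there. When $\det A>0$, $p$-homogeneity normalises the inequality to the claim
$$F(B)\geq F(\Id)\qquad\text{for every }B\in\textup{SL}(n,\R),$$
which by polar decomposition and \eqref{eq:isotropy} reduces further to the diagonal case $B=\tp{diag}(\sigma_1,\dots,\sigma_n)$ with $\sigma_i>0$ and $\prod_i\sigma_i=1$. Note that $\sigma\mapsto F(\tp{diag}(\sigma))$ is fully permutation-symmetric, since for any $i\neq j$ conjugation by a $\pi/2$-rotation in the $(i,j)$-coordinate plane belongs to $\textup{SO}(n)$ and interchanges $\sigma_i$ with $\sigma_j$.

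The heart of the argument is the diagonal bound, which we establish via a Jensen inequality for laminates. Concretely, we construct a laminate probability measure $\nu$ supported in the $\textup{SO}(n)\times\textup{SO}(n)$-orbit of $B$ and with barycenter $\Id$. Rank-one convexity of $F$ then yields
$$F(\Id)\leq \int F\,d\nu = F(B),$$
the last equality because $F$ is constant on the orbit by isotropy. That $\Id$ lies in the rank-one convex hull of the orbit is plausible because $\sigma_n(B)\leq 1\leq \sigma_1(B)$ (a consequence of $\prod_i\sigma_i=1$), and rank-one directions of the form $u\otimes v$ with $u\perp v$ are trace-free and hence compatible with the constraint $\det=1$; iterated splittings in suitable two-dimensional coordinate blocks should then produce the required laminate.

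The principal technical obstacle lies precisely in this laminate construction. For $n=2$ the argument is transparent from the classical description of the rank-one convex hull of $\textup{SO}(2)\cdot\tp{diag}(\lambda,\lambda^{-1})\cdot\textup{SO}(2)$, which explicitly contains $\Id$. For $n\geq 3$ one must organise the splittings inductively through two-dimensional slices while respecting both the determinantal constraint and the symmetry structure, which is where the case $p\geq n$ enters crucially: this exponent is precisely the threshold at which $(\det)_+^{p/n}$ becomes convex in the top minor, and hence the threshold at which the constructed $G$ remains a valid polyconvex minorant.
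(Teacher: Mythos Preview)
Your overall strategy coincides with the paper's: you correctly identify the polyconvex minorant $G(A)=F(\Id)\,(\det A)_+^{p/n}$, note that $p\geq n$ is exactly what makes $G$ polyconvex, and reduce everything to the single inequality
\[
F(B)\geq F(\Id)\qquad\text{for all }B\in\textup{SL}(n,\R),
\]
equivalently for diagonal $B=\tp{diag}(\sigma_1,\dots,\sigma_n)$ with $\sigma_i>0$, $\prod_i\sigma_i=1$. Up to this point your argument matches the paper's proof of Theorem~\ref{thm:automaticpc} line by line.

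The gap is in how you propose to prove that diagonal inequality. You announce a laminate $\nu$ supported on the $\textup{SO}(n)\times\textup{SO}(n)$-orbit of $B$ with barycenter $\Id$, but you do not construct it; you yourself call this ``the principal technical obstacle'' and for $n\geq 3$ offer only the hope that ``iterated splittings in suitable two-dimensional coordinate blocks should then produce the required laminate.'' As written this is not a proof. (Your closing remark is also slightly off: the condition $p\geq n$ plays no role in this step---the inequality $F(\Id)\leq F(B)$ on $\textup{SL}(n)$ holds for rank-one convex isotropic $F$ of any homogeneity; $p\geq n$ is used only to make $G$ polyconvex, which you already observed earlier.)

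The paper closes this gap not by building laminates directly but via the Baker--Ericksen inequalities (Proposition~\ref{prop:BE}): from these one deduces a monotonicity statement (Proposition~\ref{prop:monotonicity}) asserting that $F(A)\leq F(B)$ whenever the partial products of signed singular values of $A$ are dominated by those of $B$ and the determinants agree. Taking $A=(\det B)^{1/n}\Id$ gives exactly the needed inequality. Your laminate heuristic is in fact the shadow of this argument: the proof of Proposition~\ref{prop:BE} uses the rank-one segment $t\mapsto\bigl(\begin{smallmatrix}\sqrt{\lambda_1\lambda_2}&t\\0&\sqrt{\lambda_1\lambda_2}\end{smallmatrix}\bigr)$, whose endpoints $t=\pm(\lambda_1-\lambda_2)$ lie in the orbit of $\tp{diag}(\lambda_1,\lambda_2)$ and whose midpoint is $\sqrt{\lambda_1\lambda_2}\,\Id$. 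Iterating this two-by-two step over pairs of singular values (geometric averaging) is precisely the inductive scheme you allude to; the Baker--Ericksen/monotonicity formulation packages it cleanly and avoids any limiting construction of laminates. If you want to keep your presentation, you should either carry out that iteration explicitly or simply invoke Propositions~\ref{prop:BE} and~\ref{prop:monotonicity}.
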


The conclusion of Theorem \ref{thm:automaticpc} fails if any of its assumptions is removed:
\begin{remark}[Necessity of all conditions]\label{rmk:optimality}
We take $n=2$.
\begin{enumerate}
\item \label{it:necnonneg} Necessity of non-negativity: for $p>2$, the integrand $B_p$ is rank-one convex, isotropic, positively $p$-homogeneous, but non polyconvex at $\Id$.
\item\label{it:necp>n} Necessity of $p\geq n$: for $p<n=2$, the integrand $B_p^+$ is rank-one convex, isotropic, non-negative, but non polyconvex at $\Id$.
\item\label{it:neciso} Necessity of isotropy: for $p\geq 2$ there is an integrand $F_p\colon \R^{2\times 2}\to[0,+\infty)$ which is rank-one convex, positively $p$-homogeneous but which is not polyconvex at $\Id$.
\end{enumerate}
We refer the reader to Propositions \ref{prop:necisotropy} and \ref{prop:nonpc} for proofs of the above claims and to \cite{Sylhavy2002,Sverak1991} for an example which shows the necessity of homogeneity assumptions.
\end{remark}

The proof of Theorem \ref{thm:automaticpc} is an application of the well-known Baker--Ericksen inequalities, which are satisfied by all rank-one convex isotropic integrands. There is a very extensive literature on isotropic rank-one convex integrands, see e.g.\ \cite{Aubert1995,Ball1984a,Dacorogna2001,Dacorogna1999,Knowles1976,Mielke2005} and the references therein, as well as the papers by \v Silhavy \cite{Silhavy1999,Silhavy2002,Silhavy2003}. Here we give a new proof of the fact that the Baker--Ericksen inequalities imply a certain monotonicity property; although this fact had been previously observed in the literature, the connection with homogeneous integrands had not been made.

We also show that the integrands in Theorem \ref{thm:automaticpc} are in general not polyconvex everywhere:

\begin{prop}\label{prop:Bpn=3}
For each $p>3$, the integrand $B_p^+\colon \R^{3\times 3}\to [0,+\infty)$ is not polyconvex.
\end{prop}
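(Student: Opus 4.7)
The plan is to disprove polyconvexity of $B_p^+$ by exhibiting a finite probability measure $\mu=\sum_i\lambda_i\delta_{A_i}$ on $\R^{3\times 3}$ and a matrix $\bar A$ satisfying $\sum_i\lambda_i M(A_i)=M(\bar A)$ but $\sum_i\lambda_i B_p^+(A_i)<B_p^+(\bar A)$, where $M(A)$ denotes the full vector of minors of $A$. By positive $p$-homogeneity and isotropy, I may take $\bar A$ diagonal with $|\bar A|=1$, and by Theorem \ref{thm:automaticpc} it must be non-conformal.

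The cleanest route to the strict inequality is to place all atoms $A_i$ in the zero set
\[
K=\{A\in\R^{3\times 3}: B_p^+(A)=0\}=\bigl\{A: \det A\leq -(1-3/p)|A|^3\bigr\},
\]
so that $\sum_i\lambda_iB_p^+(A_i)=0$. The task then reduces to finding $\bar A\notin K$ (giving $B_p^+(\bar A)>0$) whose minor vector lies in the convex hull of $M(K)$. For $p>3$ the set $K$ is large: it contains all orthogonal conjugates of the matrices $\operatorname{diag}(a,a,-b)$ with $b\geq a>0$, and more generally any diagonal $\operatorname{diag}(\alpha_1,\alpha_2,\alpha_3)$ with $\alpha_1\alpha_2\alpha_3\leq -(1-3/p)\max_i|\alpha_i|^3$. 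With three diagonal atoms one has $3\cdot 9+2=29$ free parameters against the $9+9+1=19$ scalar matching equations for the mean of the entries, cofactors and determinant, leaving a $10$-dimensional solution family from which one would then select a configuration with $\bar A$ outside $K$.

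The main obstacle is the nonlinear (quadratic and cubic) nature of the cofactor and determinant matching conditions. A two-atom configuration never suffices: $B_p^+$ is rank-one convex (as the maximum of the rank-one convex $B_p$ with $0$), and for any two-atom measure with $\lambda A_1+(1-\lambda)A_2=\bar A$ and matching cofactors, the polarisation identity
\[
\operatorname{cof}(A_1-A_2)=\operatorname{cof}(A_1)-2\operatorname{cof}_1(A_1,A_2)+\operatorname{cof}(A_2)=0
\]
forces $A_1-A_2$ to have rank at most one, whence rank-one convexity precludes a violation. Thus the configuration must genuinely involve three or more atoms with higher-rank pairwise differences, and this is where the matching becomes delicate. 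A practical route is to exploit isotropy by restricting to measures invariant under the stabiliser of $\bar A$ in $\operatorname{SO}(3)\times\operatorname{SO}(3)$; for a non-conformal diagonal $\bar A$ with two equal singular values this stabiliser contains a copy of $\operatorname{SO}(2)$, reducing the $19$ matching equations to a handful of scalar relations and bringing the construction within reach of an elementary direct calculation on a few parameters.
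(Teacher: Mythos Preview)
Your proposed ``all atoms in the zero set'' route cannot succeed. For $p>3$ the zero set is
\[
K=\bigl\{A:(1-\tfrac{3}{p})\,|A|^{3}+\det A\leq 0\bigr\},
\]
and the defining function $G(A)=(1-\tfrac{3}{p})\,|A|^{3}+\det A$ is \emph{polyconvex}: $|A|^{3}$ is convex (a convex increasing function composed with the operator norm) and $\det A$ is polyaffine. Consequently $K$ is a polyconvex set. Concretely, whenever $\sum_i\lambda_i M(A_i)=M(\bar A)$ with all $A_i\in K$, Jensen's inequality for the polyconvex $G$ yields $G(\bar A)\leq\sum_i\lambda_i G(A_i)\leq 0$, forcing $\bar A\in K$ and hence $B_p^{+}(\bar A)=0$. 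Thus no $\bar A\notin K$ is reachable by such a measure, and the programme from your second paragraph onward collapses; the parameter count and the symmetry reduction are moot. Any violating measure must carry atoms with $B_p^{+}(A_i)>0$, so the strict inequality $\sum_i\lambda_i B_p^{+}(A_i)<B_p^{+}(\bar A)$ is no longer automatic and requires a genuinely new idea you have not supplied.

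The paper avoids the measure construction altogether by working on the dual side. It fixes the degenerate, non-conformal point $\bar A=\operatorname{diag}(1,0,0)$, where $B_p^{+}(\bar A)=1-\tfrac{3}{p}>0$, assumes an affine-in-minors support exists there, and pins down \emph{all} of its coefficients on the diagonal subspace by enforcing first-order contact along the rays $\operatorname{diag}(1,y,0)$, $\operatorname{diag}(1,0,z)$ and $\operatorname{diag}(1,y,y)$, where $B_p^{+}$ and the putative support must touch. The uniquely determined affine-in-minors function is then shown to exceed $B_p^{+}$ along $\operatorname{diag}(0,y,-y)$ for $y<1$ close to $1$, giving the contradiction. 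The choice of a rank-one base point, with vanishing cofactors and determinant, is what makes the coefficients fall out one by one.
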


When combined with Remark \ref{rmk:optimality}, Proposition \ref{prop:Bpn=3} shows that Theorem \ref{thm:automaticpc} is, in general, optimal. 
Nonetheless, we are yet to address the two-dimensional case, and here we have a surprise for the reader:

\begin{prop}\label{prop:Bpn=2}
The integrand $B_p^+\colon \R^{2\times 2}\to [0,+\infty)$ is polyconvex for $p\geq 2$.
\end{prop}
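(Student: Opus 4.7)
Proof sketch. The plan is to construct an explicit convex function $G\colon\R^{2\times 2}\times\R\to[0,+\infty)$ with $G(A,\det A) = B_p^+(A)$; by definition this shows $B_p^+$ is polyconvex. In two dimensions, the orthogonal decomposition $A = A^+ + A^-$ of a matrix into its conformal and anti-conformal parts satisfies the key identities $|A| = |A^+|+|A^-|$ and $\det A = |A^+|^2 - |A^-|^2$. A direct manipulation of the definition of $B_p$ using these yields
\[ B_p(A) = \tfrac{2}{p}\bigl(|A^+|+|A^-|\bigr)^{p-1}\bigl((p-1)|A^+|-|A^-|\bigr), \]
so that $B_p^+(A)$ is obtained by taking the positive part of the bracketed second factor.

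From $|A^+|^2 = |A^-|^2 + \det A \ge 0$ we obtain $|A^+| = \sqrt{|A^-|^2+\det A}$. This suggests the candidate $G(A,d) := \hat G(|A^-|(A),d)$, where
\[ \hat G(\eta,d) := \tfrac{2}{p}\bigl(\sqrt{(\eta^2+d)^+}+\eta\bigr)^{p-1}\bigl((p-1)\sqrt{(\eta^2+d)^+}-\eta\bigr)^+ \]
is defined on $\{\eta\ge 0\}\times\R$. By construction, $G(A,\det A) = B_p^+(A)$. Since $A\mapsto|A^-|(A)$ is a seminorm on $\R^{2\times 2}$ (in particular a non-negative convex function), the standard composition rule reduces the task to showing that $\hat G$ is convex on $\{\eta\ge 0\}\times\R$ and non-decreasing in $\eta$.

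In the open region $\hat G > 0$, a direct computation with $s = \sqrt{\eta^2+d}$ and $r = s+\eta$ gives $\partial_\eta \hat G = 2(p-2)\,r^{p-1}\ge 0$ and reveals that the Hessian of $\hat G$ is a positive scalar multiple of the rank-one outer product $(r,\tfrac12)\otimes(r,\tfrac12)$, hence positive semidefinite. The rank-one degeneracy reflects the $p$-homogeneity of $B_p^+$. On the complementary region $\hat G\equiv 0$ and both claims are trivial; across the free boundary $(p-1)\sqrt{\eta^2+d}=\eta$, continuity and a short calculation show that the positive-side gradient remains a subgradient (its directional derivative into the zero region is non-positive), so $\hat G$ is globally convex. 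The case $p=2$ is immediate since $B_2^+(A) = (\det A)^+$.

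The main technical obstacle is carrying out the Hessian calculation and recognising its rank-one positive semidefinite structure; once this is in place, convexity and monotonicity of $\hat G$, and thereby polyconvexity of $B_p^+$, follow quickly from the composition rule, with the two-dimensional identities relating $|A^\pm|$, $|A|$ and $\det A$ reducing the whole problem to a question about a function of the two scalar variables $(\eta,d)$.
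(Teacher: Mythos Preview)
Your proposal is correct and follows essentially the same route as the paper: both reduce to showing that a function of the two scalars $(|A^-|,\det A)$ is convex (with rank-one positive semidefinite Hessian in the smooth region, and a gluing argument across the zero set) and then invoke the convexity of $A\mapsto|A^-|$ in two dimensions to conclude polyconvexity. You are in fact slightly more careful than the paper in explicitly isolating the monotonicity $\partial_\eta\hat G=2(p-2)r^{p-1}\geq 0$, which is needed for the composition with the convex (but not affine) map $A\mapsto|A^-|$ and is only implicit in the paper's final step.
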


In fact, the situation appears to be genuinely different when $n=2$, and it may well be that in this case all the integrands satisfying the conditions of Theorem \ref{thm:automaticpc} are polyconvex: we have not been able to find any counter-example to this claim. There are  two other results related to this possibility: the first is that any connected, compact isotropic set $K\subset \R^{2\times 2}$ that is rank-one convex is also polyconvex \cite{Cardaliaguet2002a,Cardaliaguet2002,Conti2003}; the second is that rank-one convexity implies polyconvexity for isotropic integrands in $\R^{2\times 2}_+$ which are $0$-homogeneous \cite{Martin2017}. In any case, polyconvexity is a nonlocal condition \cite{Kristensen2000}, which makes it difficult to verify.

%

In the above statements, we have focused on the case $m=n$; as promised above, we now indicate how our results extend to the general case.

\begin{remark}[Rectangular matrices]
\label{rmk:rectangular}
In general we can replace the isotropy condition \eqref{eq:isotropy} with the assumption that $F$ is $\textup{SO}(m)\times \textup{SO}(n)$-invariant. It turns out that, when $m\neq n$, this is equivalent to $F$ being $\textup{O}(m)\times \textup{O}(n)$-invariant \cite[Proposition 5.32]{Dacorogna2007}. Integrands which are $\textup{O}(m)\times \textup{O}(n)$-invariant can be written as functions of the singular values, while \eqref{eq:isotropy} only says that $F$ can be written as a function of the \textit{signed} singular values, see already Lemma \ref{lemma:isotropy}. Thus, due to extra symmetry, the case $m\neq n$ is much simpler to treat. We refer the reader to Remark \ref{rmk:rectangularextension} for the corresponding generalization of Theorem \ref{thm:automaticpc}.
\end{remark}

We conclude this introduction with a brief outline of the paper. Section \ref{sec:notation} contains useful notation and some preliminary remarks about $B_p$. Section \ref{sec:p>n} contains a general discussion of isotropic rank-one convex integrands, together with the proof of Theorem \ref{thm:automaticpc}. Section \ref{sec:Bp} contains the proof of Theorem \ref{thm:automaticqc} and of Propositions \ref{prop:Bpn=3} and \ref{prop:Bpn=2}.

\section{Notation}\label{sec:notation}

For an integrand $F\colon \R^{n\times n}\to\R$ we write $F^+\equiv \max\{F,0\}$.
We denote by $\Id\in \R^{n\times n}$ the identity matrix and $\overline{\Id}\equiv \tp{diag}(1,\dots, 1,-1)$. 
For a matrix $A\in \R^{n\times n}$, we write 
$$|A|\equiv \max_{|\xi|=1} |A\xi|$$ for its operator norm. We also write $\R^{n\times n}_\pm\equiv \R^{n\times n}\cap \{\pm \det>0\}$.

Recall the definition \eqref{eq:conformalmatrices} of the sets of conformal and anti-conformal matrices.
From the identity $\tp{cof}(A) A^\tp{T}=(\det A )\,\Id$, it is easy to see that $A\in \R^{n\times n}$ is conformal if and only if $\tp{cof}(A)=|A|^{n-2} A$, where $\tp{cof}(A)$ is the cofactor matrix of $A$.
This leads us to the definition of the conformal and anti-conformal parts of a matrix \cite{Iwaniec2002}:
\begin{equation*}
\label{eq:defconformalpart}
A^\pm \equiv \frac 1 2 \left(|A|^{\frac{n-2}{2}} A \pm |A|^{\frac{2-n}{2}}\tp{cof}(A)\right).
\end{equation*}
Our choice of powers of $|A|$ in this definition is so that the forms $|A^\pm|$ are $\frac n 2$-homogeneous, as $\frac n 2$ is the critical Sobolev exponent for the regularity theory of conformal mappings \cite{Iwaniec1993b}, and
\begin{equation}
\label{eq:normconformalpart}
|A^\pm|=\frac 1 2\Big(|A|^{\frac n 2} \pm |A|^{-\frac n 2} \det A\Big),
\end{equation}
c.f.\ \cite[Lemma 2.1]{Iwaniec2002}.
This yields the useful identities
\begin{equation}
\label{eq:conformalcoords}
|A|^{\frac n 2}=|A^+|+|A^-|, \qquad \det A = |A^+|^2-|A^-|^2.
\end{equation}

Comparing \eqref{eq:defBp} with \eqref{eq:normconformalpart}, we observe that
\begin{equation*}
\label{eq:Bn/2}
B_{n/2}(A)=2 |A^+ |.
\end{equation*}
Although we will not use this fact in the sequel, it is conceptually helpful to realize that, for $p>\frac n2$, $B_p^+$ is a generalization of $|A^+|$ which measures the distance of $A$ to the set of anti $\frac{p}{|p-n|}$-quasiconformal matrices. To make this precise, we introduce the cones 
$$\tp{QCO}^\pm(n,K)\equiv \left\{A\in \R^{n\times n}: |A|^n\leq \pm K \det A\right\}$$
for $K\geq 1$. In particular, $\tp{QCO}^\pm(n,1)=\tp{CO}^\pm(n)$. Now simply note that
$B_p(A)\leq 0$ if and only if $A\in \tp{QCO}^-(n,K)$, where $K\equiv \frac{p}{|p-n|}.$


\section{Isotropic integrands and automatic polyconvexity}\label{sec:p>n}

Given $A\in \R^{n\times n}$, we write $\lambda(A)\equiv(\lambda_1(A),\dots, \lambda_n(A))$ for the vector of \textit{signed singular values} of $A$, which is defined as follows: the numbers 
$$\lambda_1(A)\geq \dots \geq \lambda_{n-1}(A)\geq |\lambda_n(A)|$$ are the eigenvalues of the positive-definite matrix $\sqrt{A^\tp{T} A}$, and $\textup{sign}(\lambda_n(A))=\textup{sign}(\det A)$. 

It is well-known that isotropic integrands admit a simple representation in terms of the signed singular values \cite{Dacorogna2007}:

\begin{lemma}
\label{lemma:isotropy}
An integrand $F\colon \R^{n\times n}\to \R$ is isotropic if and only if there is a symmetric, even function $f\colon \R^n\to \R$ such that
\begin{equation}
\label{eq:repisotropy}
F(A)=f(\lambda_1(A),\dots, \lambda_n(A)).
\end{equation}
\end{lemma}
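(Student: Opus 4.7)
The plan is to proceed via the signed singular value decomposition. Given $A\in \R^{n\times n}$, the standard SVD yields $A=U_0\Sigma V_0^\tp{T}$ with $U_0,V_0\in \tp{O}(n)$ and $\Sigma$ non-negative diagonal; flipping the sign of the last column of $U_0$ (together with $\sigma_n$) when $\det U_0=-1$, and similarly for $V_0$, produces a \emph{signed} SVD
$$A=U\,\tp{diag}(\lambda(A))\,V^\tp{T},\qquad U,V\in \tp{SO}(n).$$
The sign convention $\tp{sign}(\lambda_n(A))=\tp{sign}(\det A)$ is automatic from $\det A=\det U\cdot\det V\cdot\lambda_1\cdots\lambda_n=\lambda_1\cdots\lambda_n$, since $\lambda_1,\dots,\lambda_{n-1}\geq 0$.

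For the reverse implication, suppose $F(A)=f(\lambda(A))$ with $f$ symmetric and even. Given $Q,R\in \tp{SO}(n)$, the unsigned singular values of $QAR$ coincide with those of $A$ because $(QAR)^\tp{T}(QAR)=R^\tp{T}A^\tp{T}AR$ is similar to $A^\tp{T}A$, and $\det(QAR)=\det A$ since $\det Q=\det R=1$. Hence $\lambda(QAR)=\lambda(A)$ and $F(QAR)=F(A)$, proving isotropy.

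For the forward implication, define $f\colon \R^n\to \R$ by $f(x):=F(\tp{diag}(x))$. The identity $F(A)=f(\lambda(A))$ is immediate from the signed SVD and isotropy. The required symmetries of $f$ follow by applying isotropy with carefully chosen $Q,R\in \tp{SO}(n)$. For \emph{permutation invariance}, a permutation matrix $P_\sigma$ with $\det P_\sigma=+1$ yields $f(x_{\sigma(1)},\dots,x_{\sigma(n)})=F(P_\sigma\tp{diag}(x)P_\sigma^\tp{T})=f(x)$ directly; when $\det P_\sigma=-1$, write $P_\sigma=RD$ with $R\in \tp{SO}(n)$ and $D=\tp{diag}(1,\dots,1,-1)$, and use that $D$ commutes with diagonal matrices and $D^2=\Id$ to rewrite $P_\sigma\tp{diag}(x)P_\sigma^\tp{T}=R\tp{diag}(x)R^\tp{T}$, obtaining the same conclusion by isotropy. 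For \emph{evenness}, given $\epsilon\in\{\pm 1\}^n$ with $\prod_i\epsilon_i=+1$, the matrix $E:=\tp{diag}(\epsilon)$ lies in $\tp{SO}(n)$, so $f(\epsilon_1 x_1,\dots,\epsilon_n x_n)=F(E\cdot\tp{diag}(x))=f(x)$.

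The main obstacle is the orientation constraint $\tp{SO}(n)\times \tp{SO}(n)$ rather than $\tp{O}(n)\times \tp{O}(n)$: it is precisely what forces the use of \emph{signed} singular values (tracking $\tp{sign}(\det A)$) and restricts the sign-flip symmetry of $f$ to an even number of coordinates. The proof handles this cleanly by factoring orientation-reversing orthogonal matrices through $\tp{SO}(n)$ via diagonal reflections that commute with $\tp{diag}(x)$.
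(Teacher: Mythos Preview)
Your proof is correct. The paper does not actually prove this lemma: it states it as a well-known fact with a citation to Dacorogna's book, so you have supplied a complete argument where the paper gives none. Your handling of the orientation constraint---factoring odd permutation matrices as $P_\sigma=RD$ with $R\in\tp{SO}(n)$ and $D=\overline{\Id}$, then using that $D$ commutes with diagonals and $D^2=\Id$---is exactly the right manoeuvre, and your observation that the sign convention on $\lambda_n$ is forced by $\det A=\lambda_1\cdots\lambda_n$ is clean.
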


We recall that $f\colon \R^n\to \R$ is \textit{symmetric} if $f(P\lambda)=f(\lambda)$ for every $\lambda\in \R^n$ and every permutation matrix $P\in \R^{n\times n}$, and $f$ is \textit{even} if $f(\sigma \lambda)=f(\lambda)$ whenever $\sigma\in \textup{SO}(n)$ is a diagonal matrix.

By restricting $F$ to the diagonal matrices it is clear that if $F$ is $C^k$ then so is $f$, for any $k=0,1,\dots, \infty$. Remarkably, the much deeper converse is also true \cite{Ball1984a,Silhavy2000}.

\subsection{The Baker--Ericksen inequalities and monotonicity}

We begin by recalling the well-known Baker--Ericksen inequalities:

\begin{prop}[Baker--Ericksen inequalities]\label{prop:BE}
Let $F\colon \R^{n\times n}_+\to \R$ be a $C^1$ isotropic rank-one convex integrand with representation \eqref{eq:repisotropy}. If $i\neq j$ and $\lambda_i\neq \lambda_j$ then
$$\frac{\lambda_i \p_i f(\lambda) - \lambda_j \p_j f(\lambda)}{\lambda_i-\lambda_j}\geq 0$$
for all $\lambda\in \R^n$ such that $\lambda_k>0$ for all $k=1,\dots, n$.
\end{prop}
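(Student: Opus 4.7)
My plan is to test rank-one convexity of $F$ along the affine family
$$A(t)\equiv A+t\,e_i\otimes e_j,\qquad t\in \R,$$
where, thanks to isotropy and the singular value decomposition, I may take $A=\tp{diag}(\lambda_1,\dots,\lambda_n)$ with $\lambda_k>0$ and $\lambda_i\neq\lambda_j$. Since $F$ is $C^1$ and rank-one convex, the function $\phi(t)\equiv F(A(t))$ is $C^1$ and convex on $\R$, so the whole proof reduces to computing its Taylor expansion at $t=0$ and checking the sign of the $t^2$ coefficient.

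The first step is to show that $\phi'(0)=\p_{A_{ij}}F(A)$ vanishes. I would obtain this by differentiating the identities $F(QA)=F(A)=F(AR)$ at $Q=R=\Id$ along the skew direction $J=e_i\otimes e_j-e_j\otimes e_i$. Using $JA=\lambda_j e_i\otimes e_j-\lambda_i e_j\otimes e_i$ and $AJ=\lambda_i e_i\otimes e_j-\lambda_j e_j\otimes e_i$, this yields the $2\times 2$ linear system $\lambda_j\p_{A_{ij}}F(A)=\lambda_i\p_{A_{ji}}F(A)$ and $\lambda_i\p_{A_{ij}}F(A)=\lambda_j\p_{A_{ji}}F(A)$ in the unknowns $(\p_{A_{ij}}F(A),\p_{A_{ji}}F(A))$, whose determinant $\lambda_i^2-\lambda_j^2$ is non-zero, so both partials vanish. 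Combined with convexity of $\phi$ this gives $\phi(t)\geq\phi(0)$ for all $t\in\R$.

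The heart of the argument is then a second-order expansion of the singular values of $A(t)$. The matrix $A(t)$ differs from $A$ only in its $(i,j)$-block $\bigl(\begin{smallmatrix}\lambda_i & t\\ 0 & \lambda_j\end{smallmatrix}\bigr)$, and diagonalising $A(t)^\tp{T}A(t)$ inside this block gives
$$\lambda_i(A(t))=\lambda_i+\frac{\lambda_i\,t^2}{2(\lambda_i^2-\lambda_j^2)}+O(t^4),\qquad \lambda_j(A(t))=\lambda_j-\frac{\lambda_j\,t^2}{2(\lambda_i^2-\lambda_j^2)}+O(t^4),$$
while $\lambda_k(A(t))=\lambda_k$ for $k\neq i,j$. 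Since $\det A(t)=\det A$ (by the matrix determinant lemma applied to the rank-one perturbation $te_i\otimes e_j$), there are no issues with the sign conventions used for signed singular values.

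Inserting this into $F(A(t))=f(\lambda(A(t)))$ and using $\phi(t)-\phi(0)\geq 0$ gives
$$0\leq \frac{t^2}{2(\lambda_i^2-\lambda_j^2)}\bigl(\lambda_i\p_i f(\lambda)-\lambda_j\p_j f(\lambda)\bigr)+O(t^4).$$
Dividing by $t^2$, letting $t\to 0$, and cancelling the positive factor $\lambda_i+\lambda_j$ from $\lambda_i^2-\lambda_j^2$ then yields the Baker--Ericksen inequality. The step I expect to be the main technical obstacle is the singular-value perturbation analysis, which is nevertheless an elementary $2\times 2$ computation; positivity of all $\lambda_k$ enters only at the very end, to guarantee $\lambda_i+\lambda_j>0$.
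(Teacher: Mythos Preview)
Your proof is correct and follows essentially the same strategy as the paper: test rank-one convexity along the off-diagonal direction $e_i\otimes e_j$ and exploit convexity of the resulting one-variable function. The only substantive difference is the choice of base point on the rank-one line. The paper starts from $\sqrt{\lambda_i\lambda_j}\,\Id$ (in the relevant $2\times 2$ block), so that the target singular values $(\lambda_i,\lambda_j)$ are hit \emph{exactly} at $t=\lambda_i-\lambda_j$, and then reads off the Baker--Ericksen inequality directly from $\dot\varphi(t)\geq 0$ (using that $\varphi$ is even and convex) without any Taylor expansion. You instead start from $\tp{diag}(\lambda)$ and recover the inequality as the $t^2$ coefficient in a perturbation expansion. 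Both choices work; the paper's avoids the perturbation calculus, yours avoids having to spot the clever base point.

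Two small remarks. First, your argument for $\phi'(0)=0$ via differentiating the isotropy identities is correct but more than you need: your own singular-value computation already shows that $\lambda_i(A(t)),\lambda_j(A(t))$ depend only on $t^2$, so $\phi$ is even in $t$ and hence $\phi'(0)=0$ automatically (this is also how the paper argues). Second, under the stated $C^1$ hypothesis on $F$ (hence on $f$), the remainder after applying $\nabla f$ to the $O(t^2)$ perturbation of the singular values is only $o(t^2)$, not $O(t^4)$ as you wrote; this does not affect the conclusion, since $o(t^2)/t^2\to 0$ just as well.
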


We include a proof for the sake of completeness.

\begin{proof}
Without loss of generality we may take $i=1, j=2$ and $n=2$. Fix $\lambda_1>\lambda_2>0$ and consider the line parametrized by
$$A(t)=\begin{bmatrix}
\sqrt{\lambda_1\lambda_2} & t \\ 0 & \sqrt{\lambda_1\lambda_2}
\end{bmatrix}.$$
The signed singular values $\mu_1(t)\geq \mu_2(t)>0$ of $A(t)$ satisfy
$$
\begin{cases}
\mu_1(t)^2+\mu_2(t)^2= 2 \lambda_1\lambda_2 + t^2,\\ \qquad \mu_1(t)\mu_2(t)=\lambda_1\lambda_2,
\end{cases}
\quad \implies \quad \dot \mu_i(t)= \frac{t \mu_i(t)}{\mu_1(t)^2-\mu_2(t)^2};
$$
in fact, they are the unique solution of the above system of equations.
With $\varphi(t)=F(A(t))$,
$$\dot \varphi(t) = \dot \mu_1(t)\p_1 f+\dot \mu_2(t) \p_2 f = t \frac{\mu_1(t)\p_1 f- \mu_2(t) \p_2 f}{\mu_1(t)^2-\mu_2(t)^2}.$$
Since the line $\{A(t):t\in \R\}$ is parallel to a rank-one matrix, $\varphi\colon \R\to \R$ is convex; moreover, as $\mu_i(t)=\mu_i(-t)$, $\varphi$ is also even. It follows that $\varphi$ is non-decreasing on $[0,+\infty)$ and so $\dot\varphi(t)\geq 0$ for $t>0$.
Taking $t=\lambda_1-\lambda_2>0$ we obtain $\mu_i(t)=\lambda_i$ and the conclusion follows.
\end{proof}

We next show that the Baker--Ericksen inequalities imply a certain monotonicity property of isotropic rank-one convex integrands.

\begin{prop}[Monotonicity]\label{prop:monotonicity}
Let $F\colon \R^{n\times n}\to \R$ be rank-one convex and isotropic. If $A,B\in \R^{n\times n}$ are such that 
\begin{equation}
\label{eq:eigenvalscond}
\prod_{i=1}^k \lambda_i(A) \leq \prod_{i=1}^k \lambda_i(B) \quad \text{for } k=1,\dots, n-1, \qquad \prod_{i=1}^n \lambda_i(A) = \prod_{i=1}^n \lambda_i(B)
\end{equation}
then $F(A)\leq F(B)$.
\end{prop}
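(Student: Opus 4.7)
The plan is to convert hypothesis \eqref{eq:eigenvalscond} into a classical majorization condition via a logarithmic change of variables, and to recognise the Baker--Ericksen inequality of Proposition \ref{prop:BE} as the Schur--Ostrowski criterion for Schur-convexity in these coordinates. The conclusion then follows from the Hardy--Littlewood--P\'olya theorem. A preliminary smoothing step lets us assume $F$ is $C^\infty$: since $F$ is locally Lipschitz (being rank-one convex), mollification with a radial kernel yields smooth rank-one convex isotropic approximations to which the argument can be applied, and we pass to the limit at the end.

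I would first treat the case $A, B \in \R^{n\times n}_+$ with all signed singular values strictly positive. Writing $F(A)=f(\lambda(A))$ via Lemma \ref{lemma:isotropy} and setting
$$
\tilde f(x)\equiv f\bigl(e^{x_1},\dots, e^{x_n}\bigr), \qquad x\in \R^n,
$$
a direct computation gives $\partial_i \tilde f(x)=\lambda_i\, \partial_i f(\lambda)$ with $\lambda_i=e^{x_i}$. Since $t\mapsto e^t$ is strictly increasing, Proposition \ref{prop:BE} translates exactly into the Schur--Ostrowski condition
$$
(x_i-x_j)\bigl(\partial_i \tilde f(x)-\partial_j \tilde f(x)\bigr)\geq 0 \quad \text{for all } i,j,
$$
which, together with the symmetry of $\tilde f$, yields its Schur-convexity. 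Taking logarithms of \eqref{eq:eigenvalscond} gives $\log\lambda(A)\prec \log\lambda(B)$ in the usual majorization sense, and Hardy--Littlewood--P\'olya then delivers $\tilde f(\log\lambda(A))\leq \tilde f(\log\lambda(B))$, i.e.\ $F(A)\leq F(B)$.

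The remaining cases are of a more technical nature. If $\det A$ and $\det B$ have opposite signs the equality in \eqref{eq:eigenvalscond} fails, so this case does not arise. If both determinants are negative, the evenness of $f$ allows one to flip the sign of $\lambda_n$ at the cost of flipping the sign of one other singular value, reducing to an analogue of Baker--Ericksen on a modified domain; alternatively, the proof of Proposition \ref{prop:BE} itself can be rerun to yield a variant valid at $\lambda_n<0$. Degenerate configurations in which some $\lambda_i$ vanishes are handled by approximating $A$ and $B$ by matrices with all nonvanishing singular values respecting \eqref{eq:eigenvalscond}, then invoking continuity of $F$. I expect the main obstacle to be the clean implementation of the sign-flip in the case of negative determinants and verifying that the perturbation step can be carried out while preserving the nested product constraints.
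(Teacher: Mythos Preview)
Your argument is correct and takes a route that is closely related to, but differently packaged than, the paper's. The paper introduces the partial products $\sigma_k(\lambda)=\prod_{i\le k}\lambda_i$ as new coordinates on the open chamber $\{\lambda_1>\dots>\lambda_n>0\}$ and shows directly, via the chain rule and Proposition~\ref{prop:BE}, that $f$ written in the $\sigma$-variables is non-decreasing in each of $\sigma_1,\dots,\sigma_{n-1}$; hypothesis~\eqref{eq:eigenvalscond} then gives the inequality by a coordinate-by-coordinate comparison. Your logarithmic change of variables is the additive version of the same idea: in the $x$-coordinates the partial products become partial sums, Baker--Ericksen becomes exactly the Schur--Ostrowski criterion, and the step-by-step comparison is replaced by an appeal to Hardy--Littlewood--P\'olya. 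Your framing has the merit of situating the result within classical majorization theory and explaining \emph{why} partial products are the natural quantities; the paper's argument is self-contained and avoids invoking named theorems.

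For the negative-determinant case the paper does something cleaner than either of your suggestions: it replaces $F$ by $\tilde F(A)\equiv F(\overline\Id\, A)$, which is again rank-one convex and isotropic with representation $\tilde f(\lambda)=f(\lambda_1,\dots,\lambda_{n-1},-\lambda_n)$, so one is immediately back in the positive case without rerunning Baker--Ericksen or juggling sign-flips via evenness. The degenerate case $\det A=\det B=0$ is handled in the paper by an explicit perturbation that preserves \eqref{eq:eigenvalscond}, exactly in the spirit you indicate.
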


An analogue of Proposition \ref{prop:monotonicity} was discovered by Dacorogna and Marcellini \cite[\S 7.3]{Dacorogna1999} whenever $F$ is $\tp{O}(n)$-invariant; in this case, the second condition in \eqref{eq:eigenvalscond} can be omitted.  Proposition \ref{prop:monotonicity} was  proved by  \v Silhavy in \cite{Silhavy1999} and here we give a more direct proof.

\begin{proof}
By mollifying $F$ we can clearly assume that it is smooth. Consider the domains 
\begin{align*}
&\mb G_n\equiv \{\lambda \in \R^n: \lambda_1>\lambda_2>\dots>\lambda_n>0\},\\
&\mb D_n\equiv \{\sigma\in \R^n: \sigma_1> 0, (\sigma_1)^i>\sigma_i>0 \text{ for } i=2, \dots, n.\}.
\end{align*}
Taking $\sigma_i(\lambda)=\prod_{j=1}^i \lambda_i$, consider the map $\psi\colon\mb G_n\to \mb D_n$ defined by
$$\psi(\lambda_1,\dots, \lambda_n)\equiv (\sigma_1(\lambda), \sigma_2(\lambda), \dots, \sigma_n(\lambda)).$$
It is easy to see that $\psi\colon \mb G_n\to \mb D_n$ is a smooth diffeomorphism.  Let $f$ be the representation of $F$ provided by Lemma \ref{lemma:isotropy} and consider the function $g\equiv f\circ \psi^{-1}\colon \mb D_n\to \R$, i.e.\ 
$$g(\sigma_1(\lambda), \sigma_2(\lambda), \dots, \sigma_n(\lambda))=f(\lambda_1, \lambda_2,\dots, \lambda_n).$$
Using the chain rule, we calculate
$$
\p_{\lambda_i} f=\sum_{j=i}^n\left(\prod_{k=1,k\neq i}^{j} \lambda_k \right)\p_j g$$
and so, for any $1\leq i<n$, according to Proposition \ref{prop:BE},
$$0\leq \lambda_{i} \p_{\lambda_i} f -\lambda_{i+1} \p_{\lambda_{i+1}} f = \left(\prod_{k=1}^{i} \lambda_k \right) \p_i g.$$
Thus $g$ is non-decreasing separately in its arguments, except maybe in the last one.

Suppose now that $A,B\in \R^{n\times n}_+$ satisfy \eqref{eq:eigenvalscond} and are such that $\lambda(A),\lambda(B)\in \mb G_n$. Writing $\sigma_i(A)\equiv \sigma_i(\lambda(A))$ as an abuse of notation, we have
\begin{align*}
F(A)=f(\lambda(B))
&= g(\sigma_1(B),\dots, \sigma_n(B))
\\ 
&=g\left(\sigma_1(B),\dots,\sigma_{n-1}(B), \sigma_n(A)\right) \\
& \geq g\left(\sigma_1(B),\dots,\sigma_{n-1}(A), \sigma_n(A)\right)\\
&\geq \dots \\
&\geq g(\sigma_1(A),\dots, \sigma_n(A))=f(\lambda(A))=F(B).
\end{align*}
Note that, for any $i$, $\lambda_1(B)^i\geq \lambda_1(A)^i> \sigma_i(A)$, so we always evaluate $g$ on its domain $\mb D_n$. Since the set of matrices in $\R^{n\times n}_+$ such that $\lambda(A),\lambda(B)\in \mb G_n$ is dense in $\R^{n\times n}_+$, the conclusion follows from continuity of $F$. 

When $\det A=\det B=0$ we again argue by continuity. Let $k=\max\{i: \lambda_i(A)>0\}<n$ and, for $\e>0$, consider the matrices $A_\e,B_\e\in \R^{n\times n}_+$ defined by
$$A_\e\equiv \tp{diag}(\lambda_1(A),\dots, \lambda_k(A), \e,\dots, \e), \qquad B_\e\equiv \tp{diag}(\lambda_1(B),\dots, \lambda_k(B), \e ,\dots, \e, \e \zeta),$$
where $\zeta \equiv \prod_{i=1}^k \lambda_i(A)/\lambda_i(B)$, which clearly satisfy \eqref{eq:eigenvalscond}. Since $F(A_\e)\leq F(B_\e)$ the conclusion follows by letting $\e\to 0$.

To deal with the case where $\det A=\det B<0$, we  replace $F$ with $\tilde F(A)\equiv F(\overline \Id A)$. The integrand $\tilde F$ is easily seen to be rank-one convex and isotropic, with representation $\tilde F(A)=f(\lambda_1(A),\dots, \lambda_{n-1}(A),-\lambda_n(A))$. Thus $F(A)=\tilde F(\overline \Id A)\leq \tilde F(\overline \Id B)=F(B)$, as wished.
\end{proof}

\subsection{Proof of Theorem \ref{thm:automaticpc} and other consequences}

As an immediate consequence of Proposition \ref{prop:monotonicity}, we obtain the following:

\begin{cor}\label{cor:infCO(n)}
Let $F\colon \R^{n\times n}\to \R$ be an isotropic rank-one convex integrand. Then
$$\inf_{\R^{n\times n}} F = \inf_{\tp{CO}(n)} F.$$
\end{cor}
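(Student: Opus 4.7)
My plan is to apply Proposition \ref{prop:monotonicity} to transport an arbitrary matrix to a conformal one without increasing the value of $F$. Since the inclusion $\tp{CO}(n) \subset \R^{n\times n}$ gives $\inf_{\R^{n\times n}} F \le \inf_{\tp{CO}(n)} F$ for free, the task reduces to producing, for each $A \in \R^{n\times n}$, some $B \in \tp{CO}(n)$ with $F(B) \le F(A)$.

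The natural candidate is the (anti-)conformal matrix with the same determinant as $A$. Setting $t \equiv |\det A|^{1/n}$, I take $B \equiv t\,\Id$ when $\det A \ge 0$ and $B \equiv t\,\overline{\Id}$ when $\det A < 0$. Then $B \in \tp{CO}(n)$, $\det B = \det A$, and $|\lambda_i(B)| = t$ for every $i$.

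To invoke Proposition \ref{prop:monotonicity} (with the roles of the two matrices in its statement swapped), it remains to verify the partial-product inequalities $\prod_{i=1}^k \lambda_i(B) \le \prod_{i=1}^k \lambda_i(A)$ for $k = 1, \dots, n-1$. Writing $\mu_i \equiv |\lambda_i(A)|$, so that $\mu_1 \ge \dots \ge \mu_n \ge 0$ and $\mu_1 \cdots \mu_n = t^n$, this amounts to
$$t^k \;=\; (\mu_1\cdots \mu_n)^{k/n} \;\le\; \mu_1 \cdots \mu_k,$$
which upon taking logarithms is the elementary fact that the mean of the largest $k$ elements of an ordered nonnegative sequence is at least the mean of all $n$ of them. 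Proposition \ref{prop:monotonicity} then yields $F(B) \le F(A)$.

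I do not anticipate any serious obstacle: once monotonicity is in hand, the candidate $B$ is essentially forced by the equality $\det B = \det A$ together with membership in $\tp{CO}(n)$, and the remaining partial-product inequality is immediate. The only minor point requiring care is to distinguish the sign of $\det A$, so that $B$ lands in the appropriate component $\tp{CO}^+(n)$ or $\tp{CO}^-(n)$ of $\tp{CO}(n)$.
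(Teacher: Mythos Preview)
Your proposal is correct and follows essentially the same approach as the paper: both apply Proposition~\ref{prop:monotonicity} to compare a given matrix with the (anti-)conformal matrix having the same determinant. The only cosmetic differences are that the paper handles the case $\det A<0$ by passing to $\tilde F(A)=F(\overline{\Id}A)$ rather than choosing $B=t\,\overline{\Id}$ directly, and treats $\det A=0$ by continuity rather than including it in the nonnegative case, while you additionally spell out the partial-product inequality via the ordered-mean argument.
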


\begin{proof}
Let $B\in \R^{n\times n}_+$ and note that $A=(\det B)^{\frac 1 n} \Id\in \tp{CO}^+(n)$ satisfies \eqref{eq:eigenvalscond}, thus
$$F((\det B)^{\frac 1 n} \Id )\leq F(B) \quad \implies \quad \inf_{\R^{n\times n}_+} F = \inf_{\tp{CO}^+(n)} F.$$
By replacing $F$ with $\tilde F$ as above, we deduce a similar identity in $\R^{n\times n}_-$. The case where $\det B=0$ is obtained by continuity.
\end{proof}

Combining Corollary \ref{cor:infCO(n)} with the results of Yan \cite{Yan1997} we obtain the following result:

\begin{cor}\label{cor:p>n/2}
Let $F\colon \R^{n\times n}\to [0,+\infty)$ be an isotropic rank-one convex integrand which is positively $p$-homogeneous. Then
$$F(A_0)=0 \text{ for some } A_0\neq 0 \quad \implies \quad p\geq \frac n 2.$$
\end{cor}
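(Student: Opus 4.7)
The plan is to combine Proposition \ref{prop:monotonicity} with Yan's sharp growth result \cite{Yan1997}, which asserts that every non-negative, isotropic, rank-one convex, positively $p$-homogeneous integrand that vanishes at a non-zero conformal matrix must satisfy $p\geq n/2$. The task therefore reduces to producing, from the hypothesis $F(A_0)=0$ with $A_0\neq 0$, a non-zero conformal matrix at which $F$ vanishes; once such a matrix is found, Yan's theorem closes the argument.

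In the generic case $\det A_0>0$ this is immediate: Proposition \ref{prop:monotonicity}, applied exactly as in the proof of Corollary \ref{cor:infCO(n)}, yields
$$F\bigl((\det A_0)^{1/n}\,\Id\bigr)\leq F(A_0)=0,$$
and non-negativity together with positive $p$-homogeneity upgrade this to $F(\Id)=0$. When $\det A_0<0$ I would pass to the auxiliary integrand $\tilde F(A)\equiv F(\overline{\Id}\,A)$, which inherits all the structural hypotheses of $F$ and satisfies $\tilde F(\overline{\Id}\,A_0)=F(A_0)=0$ with $\det(\overline{\Id}\,A_0)>0$; the previous step then gives $\tilde F(\Id)=F(\overline{\Id})=0$.

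The remaining case $\det A_0=0$ with $A_0\neq 0$ is more delicate, since a direct application of monotonicity only produces the tautology $F(0)\leq F(A_0)$. By isotropy I may take $A_0=\tp{diag}(\lambda_1,\dots,\lambda_r,0,\dots,0)$ with $\lambda_i>0$, and I would perturb $A_0$ by $t\sum_{i>r} e_i\otimes e_i$, iterating the rank-one convexity bound $F(B+tR)\leq t\,F(B+R)$ in each coordinate direction $e_i\otimes e_i$ to obtain $F(M_t)\lesssim t$ for the resulting matrix $M_t$. Since $\det M_t\asymp t^{n-r}$, comparing $M_t$ with the conformal matrix $(\det M_t)^{1/n}\,\Id$ via Proposition \ref{prop:monotonicity} yields
$$t^{p(n-r)/n}\,F(\Id)\lesssim F(M_t)\lesssim t.$$
Letting $t\to 0^+$ forces either $F(\Id)=0$ (so that Yan applies) or $p\geq n/(n-r)$; the latter already gives $p\geq n/2$ whenever $r\geq n-2$. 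For smaller $r$, an iterative bootstrap producing $F$ vanishing at matrices of successively higher rank $r,r+1,\dots$ must be carried out, each step being a monotonicity comparison against a rank-deficient ``conformal'' matrix of the form $\mu\,\tp{diag}(\Id_k,0)$ whose first-$k$ singular value products match those of $M_t$.

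The main obstacle I anticipate is the low-rank regime of the case $\det A_0=0$: there the one-shot inequality $p\geq n/(n-r)$ falls short of the target $n/2$, and a careful bookkeeping of the bootstrap is required to either push $F$'s vanishing all the way to $\Id$ or to extract the bound $p\geq n/2$ along the way.
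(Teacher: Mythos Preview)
Your strategy coincides with the paper's: reduce to $F$ vanishing on one of the conformal cones via the monotonicity of Proposition~\ref{prop:monotonicity} (equivalently, Corollary~\ref{cor:infCO(n)}), and then invoke Yan~\cite{Yan1997}. The paper's proof is two sentences and does \emph{not} isolate the case $\det A_0=0$: it simply asserts that $F(A_0)=0$ with $A_0\neq 0$ forces $F$ to vanish on $\tp{CO}^+(n)$ or on $\tp{CO}^-(n)$, and then cites Yan. Your treatment of the case $\det A_0\neq 0$ is identical to this.

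In the singular case you go beyond what the paper writes, and your perturbation argument is correct as far as it goes: it yields the dichotomy ``$F(\Id)=0$ or $p\geq n/(n-r)$'', which indeed settles the matter when $r\geq n-2$. The bootstrap you sketch for smaller $r$, however, has a concrete flaw. Proposition~\ref{prop:monotonicity} requires \emph{equal} determinants, so you cannot compare the full-rank perturbation $M_t$ against the singular matrix $\mu\,\tp{diag}(\Id_k,0)$. If instead you compare the rank-$(r{+}1)$ perturbation $A_0+t\,e_{r+1}\otimes e_{r+1}$ against $\mu\,\tp{diag}(\Id_{r+1},0)$ (both determinants vanish, so the proposition applies with $\mu=t^{1/(r+1)}$), the step delivers only the dichotomy ``$F(\tp{diag}(\Id_{r+1},0))=0$ or $p\geq r+1$''. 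Iterating from $r=1$, the second branch can fire already at the first step, yielding merely $p\geq 2$; combining with your direct bound $p\geq n/(n-r)$ at the halting rank does not help, since $\max\{r+1,\,n/(n-r)\}<n/2$ for $r=1$ and $n\geq 5$. So the obstacle you anticipate is real, and the outlined bootstrap does not close the low-rank singular case in high dimensions.
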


The exponent $\frac n 2$ is sharp, since the integrands $A\mapsto |A^\pm|$ vanish exactly on $\tp{CO}^\mp(n)$.

\begin{proof}
If $F(A_0)=0$ for some $A_0\neq 0$ it follows that $F$ vanishes on either $\tp{CO}^+(n)$ or $\tp{CO}^-(n)$, so by \cite{Yan1997} we must have that $F$ is identically zero.
\end{proof}

Finally, by arguing as in Corollary \ref{cor:infCO(n)}, we can prove Theorem \ref{thm:automaticpc}.

\begin{proof}[Proof of Theorem \ref{thm:automaticpc}]
Since $F$ is positively $p$-homogeneous, by Proposition \ref{prop:monotonicity}, we have
$$F(\Id) (\tp{det}^+)^{\frac p n}\leq F \text{ in } \R^{n\times n}_+.$$
As $F\geq 0$ and $\det^+$ vanishes in the complement of $\R^{n\times n}_+$, we see that this inequality in fact holds everywhere. The function $F(\Id) (\tp{det}^+)^{\frac p n}$ is polyconvex, it is below $F$ and it agrees with $F$ at $\Id$, so $F$ is polyconvex at $\Id$. By homogeneity and isotropy, it follows that $F$ is polyconvex in $\tp{CO}^+(n)$. A similar argument shows that $F$ is also polyconvex in $\tp{CO}^-(n)$.
\end{proof}

\begin{remark}[Rectangular matrices]
\label{rmk:rectangularextension}
As mentioned in Remark \ref{rmk:rectangular}, Theorem \ref{thm:automaticpc} admits an extension to the case of integrands $F\colon \R^{m\times n}\to \R$ where $m\neq n$. More precisely, the following holds:  an $\textup{SO}(m)\times \textup{SO}(n)$-invariant rank-one convex integrand $F\colon \R^{m\times n}\to [0,+\infty)$, which is also positively $p$-homogeneous for some $p\geq \min\{m,n\}$, is polyconvex in the set
$$\left\{A\in \R^{m\times n}: \mu(A)=t\e \text{ for some } t\in \R \text{ and } \e\in \{-1,1\}^{\min\{m,n\}}\right\},$$
where $\mu(A)$ is the vector of singular values of $A$.
By \cite[Propositions 5.31 and 5.32]{Dacorogna2007}, if $m>n$, $F$ is $\tp{SO}(m)\times \tp{SO}(n)$ invariant if and only if there is a symmetric, fully even function $f\colon \R^n\to \R$ such that
$$F(A)=f(\mu_1(A),\dots, \mu_n(A));$$
the case $m<n$ is similar. Recall that $f\colon \R^n \to \R$ is fully even if $f(\sigma \lambda)=f(\lambda)$ for any $\lambda\in \R^n$ and any diagonal matrix $\sigma \in \tp{O}(n)$.
Using this representation we may repeat the arguments of this section to prove the above statement, and we leave the details to the interested reader.
\end{remark}

\subsection{Necessity of isotropy}

The purpose of this subsection is to give an example which shows that isotropy is a necessary condition for Theorem \ref{thm:automaticpc}. The next proposition proves, in particular, the claim in Remark \ref{rmk:optimality}\ref{it:neciso}.

\begin{prop}\label{prop:necisotropy}
For $p\geq 2$, there is an integrand $F_p\colon \R^{2\times 2}\to [0,+\infty)$ which is rank-one convex, positively $p$-homogeneous and non-polyconvex at $\Id$. 
\end{prop}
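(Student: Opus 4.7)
The idea is to produce $F_p$ as a non-negative anisotropic perturbation of a well-chosen rank-one convex $p$-homogeneous function that is non-polyconvex at $\tp{Id}$. A natural starting point is the Burkholder integrand $B_p$---rank-one convex, positively $p$-homogeneous, and non-polyconvex at $\tp{Id}$ for $p>2$ by Remark~\ref{rmk:optimality}\ref{it:necnonneg}, but neither non-negative nor anisotropic---modified by adding a convex (hence polyconvex), anisotropic, non-negative $p$-homogeneous summand that vanishes at $\tp{Id}$. Concretely, with $\pi(A) := A - \tfrac{1}{2}(\tp{tr}A)\tp{Id}$ the trace-free projection, I set
\[
F_p(A) := B_p(A) + C_p\,|\pi(A)|_F^p,
\]
where $|\cdot|_F$ denotes the Frobenius norm and $C_p>0$ will be chosen minimally.

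I would first verify the structural properties: $F_p$ is rank-one convex (sum of two rank-one convex integrands: $B_p$ by \cite{Iwaniec2002}, and $|\pi(A)|_F^p$ as the composition of the convex increasing function $t\mapsto t^{p/2}$ with the positive-semidefinite quadratic form $A\mapsto|\pi(A)|_F^2$), positively $p$-homogeneous, and anisotropic (the trace is not $\tp{SO}(2)\times\tp{SO}(2)$-invariant). Next, the minimal $C_p$ for $F_p\ge 0$: combining the pointwise bound $B_p(A) \geq -\tfrac{2}{p}|A|^p$ (from \eqref{eq:defBp} and $|\det A|\leq|A|^2$) with the identity $|\pi(A)|_F = \sqrt{2}\,|A|$ on anti-conformal matrices---the locus where $B_p$ attains its minimum on the unit sphere---yields $C_p = 2/(p\cdot 2^{p/2})$, for which $F_p$ vanishes exactly on the anti-conformal cone and is strictly positive elsewhere. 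The main step is to show $F_p$ is not polyconvex at $\tp{Id}$, i.e., $F_p^{pc}(\tp{Id}) < F_p(\tp{Id})$. Using the characterization
\[
F_p^{pc}(\tp{Id}) = \inf\Bigl\{\sum\lambda_i F_p(A_i):\ \sum\lambda_i(A_i,\det A_i)=(\tp{Id},1),\ \lambda_i\geq 0,\ \sum\lambda_i=1\Bigr\},
\]
it suffices to exhibit a probing measure satisfying the barycenter conditions with $\sum\lambda_i F_p(A_i) < F_p(\tp{Id})$. By the non-polyconvexity of $B_p$ at $\tp{Id}$, there is such a measure witnessing $\sum\lambda_i B_p(A_i) < B_p(\tp{Id})$; the task is to choose it (or perturb it suitably) so that the excess $C_p\sum\lambda_i|\pi(A_i)|_F^p$ is smaller than the polyconvex gap $B_p(\tp{Id})-\sum\lambda_i B_p(A_i)$, preserving the strict inequality.

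The main obstacle is quantitatively balancing the polyconvexity gap of $B_p$ against the $\phi$-excess along an admissible probing measure: since $|\pi(A)|_F$ vanishes only on $\R\tp{Id}$ and barycenter-preserving measures cannot be supported there non-trivially, $\sum\lambda_i|\pi(A_i)|_F^p$ is genuinely positive, and one must obtain an explicit bound $B_p(\tp{Id})-B_p^{pc}(\tp{Id}) > C_p\sum\lambda_i|\pi(A_i)|_F^p$ for some specific measure (or family thereof). For the endpoint $p=2$, where $B_2=\det$ is polyconvex and no defect is available, the above strategy fails entirely; since Terpstra's theorem forbids rank-one convex non-polyconvex quadratic forms on $\R^{2\times 2}$, a genuinely non-smooth, non-quadratic $2$-homogeneous construction (such as a sup-type integrand or a limiting modification of the Burkholder family) is required.
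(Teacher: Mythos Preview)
Your proposal is a sketch rather than a proof, and the part you yourself flag as ``the main obstacle'' is in fact the entire content of the proposition. You never exhibit a probing measure for which the convex excess $C_p\sum\lambda_i|\pi(A_i)|_F^p$ is dominated by the polyconvex gap of $B_p$, and it is not clear that such a measure exists. Note that with your minimal choice of $C_p$ the integrand $F_p$ vanishes exactly on the anti-conformal cone $\tp{CO}^-(2)$, a $2$-dimensional set; any measure with $\sum\lambda_i\det A_i=1>0$ must place mass away from that cone, where $F_p$ is strictly positive, and you give no estimate controlling this contribution. The inequality $(F_p)^{pc}\geq (B_p)^{pc}+C_p|\pi|_F^p$ (which follows since $|\pi|_F^p$ is convex) shows the gap for $F_p$ at $\Id$ can only be \emph{smaller} than that of $B_p$, so the quantitative balancing you need is genuinely delicate and cannot be waved away. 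Finally, your own remarks show the construction collapses at $p=2$, where you offer no alternative; the statement explicitly includes this endpoint.

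By contrast, the paper takes a completely different and much more direct route: it builds $F_p=(F_2)^{p/2}$ from the explicit quadratic
\[
F_2(A)=\max\{a,0\}\max\{d,0\}+\tfrac12(b^2+c^2),
\]
which is a rank-one convex gluing of $G(A)=ad+\tfrac12(b^2+c^2)$ and $H(A)=\tfrac12(b^2+c^2)$. Non-polyconvexity at $\Id$ is then witnessed by an explicit three-point measure supported on diagonal matrices with at least one non-positive entry, where $F_p$ vanishes identically; this handles all $p\geq 2$, including $p=2$, in one stroke. The key structural feature is that the zero set of the paper's $F_p$ is a half-space in the diagonal coordinates, making the minors relations trivial to satisfy with zero cost---something your $B_p$-based construction, whose zero set is only the thin cone $\tp{CO}^-(2)$, cannot replicate.
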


\begin{proof}
We define the integrand $F_p\colon \R^{2\times 2}\to [0,+\infty)$ through
$$F_p(A)\equiv \left(\max\{a,0\}\max\{d,0\} + \frac 1 2 (b^2+c^2)\right)^{p/2},\qquad \text{where } A=\begin{bmatrix}
a & b \\ c & d
\end{bmatrix}.$$
Clearly $F_p$ is positively $p$-homogeneous. 
We first show that $F_2$ is rank-one convex.  Consider the integrands $G,H\colon \R^{2\times 2} \to \R$ defined by
$$G(A)\equiv a d + \frac 1 2(b^2+c^2), \qquad H(A)\equiv \frac 1 2 (b^2+c^2).$$
Clearly $H$ is convex. A simple calculation reveals that $G$ is rank-one convex: indeed, with $x=(x_1,x_2)$ and $y=(y_1,y_2)$, we have
$$\frac{\d^2 G(A+t x\otimes y)}{\d t^2} = 2 x_1 x_2 y_1 y_2 + x_1^2 y_2^2+x_2^2 y_1^2=(x_1y_2+x_2y_1)^2\geq 0.$$
Consider the set $U\equiv \{A\in \R^{2\times 2}: a,d>0\}$ and note that
$$F_2 = \begin{cases}
G & \text{in }  U,\\
H & \text{in } \R^{2\times2}\exc U.
\end{cases}$$
The set $U$ is a  component of $\{G>H\}$, so it follows from  \cite[Lemma 3.1]{Sverak1990} that $F_2$ is rank-one convex.
The general case $p>2$ now follows easily: the function $T_p(s)= s^{p/2}$ is convex and non-decreasing in $[0,+\infty)$ and since $F_2$ is rank-one convex and non-negative, the integrand  $F_p=T_p\circ F_2$ is rank-one convex as well.

We now prove that $F_p$ is not polyconvex at $\Id$. To see this, we consider the points
$$A_1\equiv\tp{diag}(-3,-3), \qquad A_2\equiv\tp{diag}(9,-3),\qquad A_3\equiv\tp{diag}(-3,9),$$
which satisfy the so-called minors relations
\begin{gather*}
\Id = \frac 1 3\left(A_1+A_2+A_3\right),\qquad 
\det\Id = \frac 1 3\left(\det A_1+\det A_2 + \det A_3\right).
\end{gather*}
Were $F_p$ to be polyconvex at $\Id$, we would have
$$F_p(\Id)\leq \frac 1 3\left(F_p(A_1)+F_p(A_2)+F_p(A_3) \right)$$
see e.g.\ \cite{Dacorogna2007}. However, $F_p(\Id)=1$, while the right-hand side vanishes.
\end{proof}

The integrand in Proposition \ref{prop:necisotropy} is essentially an extension of the integrand 
$$\tp{det}^{++}(A)\equiv \det A\,\mathbbm 1_{\{A \text{ is positive definite\}}}$$
from the diagonal plane to the full space $\R^{2\times 2}$. We note that $\det^{++}$ is in fact quasiconvex in $\R^{2\times 2}_\tp{sym}$ \cite{Sverak1992} and that the method we use to extend a rank-one convex integrand from a subspace  is standard \cite{Muller2003,Sverak1990,Sverak1992a}.

\section{Around the Burkholder integrand}\label{sec:Bp}

This section contains several results concerning the Burkholder integrand. We will prove, in particular, Theorem \ref{thm:automaticqc} and Propositions \ref{prop:Bpn=3} and \ref{prop:Bpn=2}.
We begin by stating some basic properties of the Burkholder integrand, which are proved in \cite{Iwaniec2002}:

\begin{prop}[Properties of the Burkholder integrand]
\label{prop:Bpprops}
For $p\in [\frac n 2, +\infty)$, we have:
\begin{enumerate}
\item \label{it:homiso}
$B_p$ is positively $p$-homogeneous and isotropic;
\item $B_p\colon \R^{n\times n} \to \R$ is rank-one convex;
\item the set of points where $B_p$ is smooth is the dense, open set
\begin{equation*}
\label{eq:smoothpoints}
\mathcal R(n)\equiv \{A\in \R^{n\times n}: |A| ^2\tp{ is a simple eigenvalue of } A^\tp{T} A\}.
\end{equation*}
\end{enumerate}
\end{prop}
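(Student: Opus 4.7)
The plan is to verify the three items separately: (i) is immediate, (iii) is a consequence of analytic perturbation theory for eigenvalues, and (ii) is the substantive content that constitutes Iwaniec's original contribution.

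For (i), positive $p$-homogeneity follows from $|tA| = t|A|$ and $\det(tA) = t^n \det A$ for $t>0$, which together give $B_p(tA) = t^p B_p(A)$. Isotropy follows from the bi-invariance of the operator norm under $\tp{O}(n)\times\tp{O}(n)$ and the identity $\det(QAR) = \det A$ for $Q, R \in \tp{SO}(n)$.

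For (iii), write $B_p(A) = \Psi(|A|, \det A)$ with $\Psi(s, d) = (|1 - n/p|\,s^n + d)\,s^{p-n}$ smooth on $\{s > 0\}$. The determinant is a polynomial, so the question reduces to smoothness of $A \mapsto |A|$. Since $|A|^2$ is the largest eigenvalue of $A^\tp{T}A$, analytic perturbation theory gives that on $\mathcal R(n)$ this simple eigenvalue depends smoothly on $A$; as it is positive away from $A = 0$, so is its square root, and hence $B_p \in C^\infty(\mathcal R(n))$. Openness and density of $\mathcal R(n)$ are standard genericity facts about the characteristic polynomial of $A^\tp{T}A$: the locus where the top two roots coincide is an algebraic subvariety of positive codimension. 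For the reverse inclusion, at points where $|A|^2$ is a multiple eigenvalue one constructs an explicit one-parameter family along which the largest singular value is merely Lipschitz and not $C^1$, and this lack of regularity survives composition with $\Psi$ (since $\Psi$ is strictly monotone in $s$).

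The main obstacle is (ii). The plan is to verify that $\varphi(t) = B_p(A + tX)$ is convex in $t$ for arbitrary $A \in \R^{n\times n}$ and rank-one $X = x \otimes y$. By isotropy (Lemma \ref{lemma:isotropy}) we may conjugate so that $A$ is signed-diagonal, and further adjust $x, y$ into a convenient form; under such a rank-one perturbation only two signed singular values move nontrivially, and one obtains explicit formulas for $\lambda_i(t)$. Two structural inputs are then key: $\det(A + tX) = \det A + t\,\langle \tp{cof}(A), X\rangle$ is affine in $t$, and $|A + tX|$ is convex in $t$ (as a norm). Writing
$$\varphi(t) = \Bigl|1 - \tfrac{n}{p}\Bigr|\,|A + tX|^p + |A + tX|^{p-n}\det(A + tX),$$
the first term is convex because $s \mapsto s^p$ is convex and non-decreasing on $[0,\infty)$ for $p \geq 1$, which holds throughout $p \in [n/2, +\infty)$ since $n \geq 2$. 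The delicate term is the second: when $p < n$ the exponent is negative and $\det(A + tX)$ may change sign, so the product is not obviously convex. The crux of the argument, following Iwaniec, is a careful one-variable calculation that reduces the convexity of this second term to an inequality which originates from Burkholder's sharp martingale estimates in two dimensions and is promoted to $n \geq 2$ by a dimensional-reduction trick (slicing by planes generated by the rank-one direction). Reproducing this calculation in full detail is the main technical step, and I would import it directly from \cite{Iwaniec2002} rather than rederive it.
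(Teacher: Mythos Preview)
The paper does not actually prove this proposition: it simply states the result and cites \cite{Iwaniec2002}, where all three parts are established. Your proposal is therefore aligned with the paper's approach, since for the substantive item (ii) you likewise defer to Iwaniec's original argument, while adding brief sketches for (i) and (iii) that the paper omits.

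Two small inaccuracies in your sketches are worth flagging, though neither is fatal given that you ultimately cite \cite{Iwaniec2002}. First, in (ii), the claim that ``only two signed singular values move nontrivially'' under a rank-one perturbation of a signed-diagonal matrix is not correct in general dimension; a perturbation $x\otimes y$ with generic $x,y$ alters all singular values. Iwaniec's actual argument does not proceed this way. Second, in (iii), your reverse-inclusion argument relies on $\Psi(s,d)=(|1-\tfrac{n}{p}|s^n+d)s^{p-n}$ being strictly monotone in $s$, but a direct computation gives $\partial_s\Psi = |p-n|\,s^{p-n-1}(s^n\pm d)$, which vanishes precisely when $d=\mp s^n$, i.e.\ on $\tp{CO}^\mp(n)$; these are exactly points outside $\mathcal R(n)$, so the monotonicity argument does not cover them (and indeed for $p=n$ the integrand $B_n=\det$ is smooth everywhere, so (iii) as stated needs $p\neq n$). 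A correct proof of the reverse inclusion requires a more direct calculation at such points.
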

Note that $\tp{CO}(n)\subseteq \R^{n\times n}\exc \mathcal R(n)$, with equality when $n=2$.

\subsection{Extremality}

The purpose of this section is to prove the following theorem, which generalizes the results of Astala--Iwaniec--Prause--Saksman \cite{Astala2015}:

\begin{thm}\label{thm:extremality}
Let $F\colon \R^{n\times n}\to \R$ be a rank-one convex integrand which is  positively $p$-homogeneous for some $p\geq \frac n2 $ and has the form
\begin{equation}
\label{eq:symmetry}
F(A)=f(|A|^n,\det A) \qquad \text{for all } A\in \R^{n\times n} \text{ and some } f\colon \R^2\to \R.
\end{equation}
Suppose that 
$$\text{either } p\leq n\text{ and } F(\Id)=B_p(\Id)\qquad \text{  or }p\geq n\text{ and }F(\overline \Id)=B_p(\overline \Id).$$ In either case $F\geq B_p$.
\end{thm}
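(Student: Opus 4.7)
The assumptions \eqref{eq:symmetry} and positive $p$-homogeneity together collapse $F$ to a single scalar profile. Setting
\[ g(s) \equiv F(\tp{diag}(1,\dots, 1, s)), \qquad s \in [-1,1], \]
positive $p$-homogeneity gives $F(A) = |A|^p\, g(\det A/|A|^n)$ whenever $|A|>0$ (and $F(0)=0=B_p(0)$ by homogeneity, since $p>0$). The analogous profile for $B_p$ is $b_1(s) = |1-n/p|+s$, so the inequality $F \geq B_p$ is equivalent to the scalar statement $g(s) \geq b_1(s)$ on $[-1, 1]$.

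The plan is to extract two properties of $g$ from the rank-one convexity of $F$ restricted to the one-parameter family $L(t) \equiv \tp{diag}(1,\dots, 1, t)$, $t \in \R$, a rank-one line. Direct computation gives
\[ F(L(t)) = \begin{cases} g(t), & |t| \leq 1, \\ |t|^p\, g\!\left( \tp{sign}(t)\, |t|^{1-n}\right), & |t| \geq 1, \end{cases} \]
which must be convex in $t \in \R$. Restricting to $[-1,1]$, we read off that $g$ itself is convex. At the kink $t = 1$, setting $\phi(t) \equiv t^p g(t^{1-n})$, the right derivative is $\phi'(1) = p\, g(1) - (n-1)\, g'(1^+)$, and convexity at $t = 1$ forces $g'(1^-) \leq \phi'(1)$. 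Combined with the convexity bound $g'(1^-) \leq g'(1^+)$, this rearranges to the key slope estimate $g'(1^-) \leq (p/n)\, g(1)$. Under the hypothesis $F(\Id) = B_p(\Id)$ (relevant when $p \leq n$) we have $g(1) = n/p$, and hence $g'(1^-) \leq 1$. A symmetric computation at $t = -1$ under the hypothesis $F(\overline{\Id}) = B_p(\overline{\Id})$ (relevant when $p \geq n$) yields $g(-1) = -n/p$ and $g'(-1^+) \geq 1$.

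For $p \leq n$, with $b_1(s) = n/p - 1 + s$, the tangent-line inequality for the convex $g$ at $s = 1$ yields
\[ g(s) \geq g(1) + g'(1^-)\,(s - 1) \geq \tfrac{n}{p} + (s-1) = b_1(s), \qquad s \in [-1,1], \]
where the second inequality uses $s - 1 \leq 0$ and $g'(1^-) \leq 1$. The case $p \geq n$ is entirely symmetric, anchored at $s = -1$ with the bound $g'(-1^+) \geq 1$. In either case the scalar inequality lifts back via $F(A)=|A|^pg(\det A/|A|^n)$ to $F \geq B_p$ on $\R^{n\times n}$.

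The delicate step is the slope comparison at the kinks $t = \pm 1$, where $t \mapsto |L(t)|$ fails to be $C^1$: it is the precise interplay of the factor $t^p$ with the negative exponent $1-n$ appearing inside $g$ that converts rank-one convexity across the kink into the sharp one-sided derivative bound $g'(1^-) \leq (p/n) g(1)$ (and its mirror at $-1$). When $g$ is not $C^1$ one must argue in terms of convex subdifferentials throughout, which is standard one-variable convex analysis but must be set up carefully.
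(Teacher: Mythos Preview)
Your argument is correct and genuinely different from the paper's proof. The paper exploits the fact that rank-one convex integrands are quasiconvex along radial stretchings (Proposition~\ref{prop:rcradialstretchings}) together with the quasiaffine property of $B_p$ along non-expanding stretchings (Proposition~\ref{prop:Bpradialstretchings}); a cleverly chosen one-parameter family of radial stretchings then yields $b_p(1,\alpha)\leq f(1,\alpha)$ for all $\alpha\in[-1,1]$. Your route bypasses this machinery entirely: you reduce to the scalar profile $g$ and extract the sharp slope bound directly from convexity of $t\mapsto F(\textup{diag}(1,\dots,1,t))$ across the kink where the operator norm switches branches. This is more elementary and self-contained; the paper's approach is more conceptual, in that it explains the extremality of $B_p$ through its quasiaffine behaviour along a rich family of test maps, and that viewpoint links to the broader literature on radial stretchings.

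One small slip: in your computation of the right derivative at $t=1$ you write $g'(1^+)$, but $g$ is only defined on $[-1,1]$, and for $t>1$ the argument $t^{1-n}$ lies in $(0,1)$, so the relevant one-sided derivative is $g'(1^-)$. Concretely,
\[
\psi'(1^+)=\lim_{t\to 1^+}\frac{t^p g(t^{1-n})-g(1)}{t-1}=p\,g(1)-(n-1)\,g'(1^-),
\]
and convexity of $\psi$ gives $g'(1^-)=\psi'(1^-)\leq \psi'(1^+)$ directly, i.e.\ $n\,g'(1^-)\leq p\,g(1)$. This makes your auxiliary step ``$g'(1^-)\leq g'(1^+)$'' unnecessary. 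The symmetric computation at $t=-1$ likewise involves $g'(-1^+)$, which you have correctly. With this correction the argument is clean: since $\psi$ is convex on all of $\R$, its one-sided derivatives exist and are finite everywhere, and the limit above is rigorous without any further appeal to subdifferentials.
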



Before proceeding further we combine Theorems \ref{thm:automaticpc} and \ref{thm:extremality} to prove Theorem \ref{thm:automaticqc}. In order to apply Theorem \ref{thm:extremality} note that, when $n=2$, condition \eqref{eq:symmetry} is \textit{equivalent} to $F$ being isotropic, c.f.\ Lemma \ref{lemma:isotropy}, and moreover the assumption $p\geq \frac n 2=1$ poses no restriction. In arbitrary dimension, condition \eqref{eq:symmetry} is strictly stronger than isotropy. Finally the assumption $p\geq \frac n 2$ holds whenever the integrand vanishes somewhere by Corollary \ref{cor:p>n/2}.

\begin{proof}[Proof of Theorem \ref{thm:automaticqc}]

Suppose that $F$ is not identically zero, as else there is nothing to prove. We claim that
\begin{equation}
\label{eq:claimthmautomaticqc}
F(\Id)>0 \text{ or } F(\overline \Id)>0.
\end{equation}
Since $F$ is isotropic and positively homogeneous, it is uniquely determined by its values on the segment $\ell=[\Id,\overline \Id]$, on which it is convex, since $\ell$ is parallel to a rank-one matrix. It follows that $\max_\ell F= \max\{F(\Id), F(\overline \Id)\}$. Thus, if \eqref{eq:claimthmautomaticqc} did not hold, we would have $F< 0$ on $\R^{2\times 2}\exc\{0\}$, which is impossible: for any $X\in \R^{2\times 2}$ with $\tp{rank}(X)=1$, by rank-one convexity we have
$$0=F(0)\leq \frac 1 2 (F(X)+F(-X))$$
and so $F$ is non-negative either at $X$ or at $-X$. Hence \eqref{eq:claimthmautomaticqc} holds.

By replacing $F$ with $\tilde F\equiv F(\overline \Id \cdot)$ if need be we see that, without loss of generality, we may always assume that $F(\Id)>0$. Indeed, $\tilde F$ clearly inherits the rank-one convexity, isotropy and positive homogeneity of $F$, and $\tilde F(\Id)=F(\overline \Id)$.

Let us first suppose that $p\leq 2$. By the previous paragraph there is $c>0$ such that $c\,F(\Id)=B_p(\Id)$.  Since $c>0$, $cF$ is rank-one convex and hence  by Theorem \ref{thm:extremality} it is always above $B_p$. Thus, assuming that Conjecture \ref{conj:Iwaniec} holds,
$$c\,F(\Id ) = B_p(\Id) \leq \fint_\Omega B_p(\Id + \D \varphi) \d x \leq c\fint_\Omega F(\Id + \D \varphi) \d x $$ 
for any bounded Lipschitz domain $\Omega\subset \R^2$ and any $\varphi \in W^{1,\infty}_0(\Omega,\R^2)$. Thus $F$ is quasiconvex at $\Id$ and, by homogeneity, it is quasiconvex in $\tp{CO}^+(2)$. To prove that $F$ is quasiconvex in $\tp{CO}^-(2)$ we use the fact that $F\geq 0$ by assumption: if $F(\overline \Id)=0$ there is nothing to prove and, if $F(\overline \Id)>0$, we may replace $F$ with $\tilde F$ as above to conclude.

Finally, the case $p\geq 2$ is an immediate consequence of Theorem \ref{thm:automaticpc} and the remarks made just before the proof.
\end{proof}

%
%
%
%

We now proceed to the proof of Theorem \ref{thm:extremality}. We begin by recalling a few classical considerations concerning rank-one convex integrands and radial stretchings.
To be precise, a \textit{radial stretching} is a map $\phi \colon \mb B^n\to \R^n$ of the form
\begin{equation}
\label{eq:defphi}
\phi(x)=\rho(r)\frac{x}{r},\qquad r\equiv |x|,
\end{equation}  where $\rho\colon[0,1]\to[0,1]$ is a Lipschitz continuous function such that $\rho(0)=0$ and $\rho(1)=1$. Here $\mb B^n$ denotes the unit ball in $\R^n$. In particular, $\phi=\id$ on $\mb S^{n-1}$.
For a radial stretching as in \eqref{eq:defphi}, we have the identities
\begin{align}
\begin{split}
\label{eq:derivativesphi}
\det \D \phi(x)=\dot \rho(r)\frac{\rho(r)}{r}, \qquad |\D \phi(x)|=\max\Big\{|\dot \rho(r)|, \frac{\rho(r)}{r}\Big\}.
\end{split}
\end{align}
We will also use the \textit{conjugate mapping} $$\bar \phi(x)\equiv \rho(r) \frac{\bar x}{r},\qquad \bar x\equiv (x_1,\dots, x_{n-1},-x_n),$$
which satisfies $\det \D \bar \phi=-\det \D \phi$ and $|\D \bar \phi|=|\D\phi|$. 

We recall that, in general, rank-one convex integrands are quasiconvex along radial stretchings \cite{Ball1990,Sivaloganathan1988}. A very direct proof of the next proposition can be found in \cite[Proposition 3.4]{Ball1990a}:

\begin{prop}\label{prop:rcradialstretchings}
Let $F\colon \R^{n\times n}\to \R$ be rank-one convex. For any radial stretching $\phi$, 
$$F(\Id)\leq \fint_{\mb B^n} F(\D \phi) \d x, \qquad
F(\overline \Id)\leq \fint_{\mb B^n} F(\D \bar\phi) \d x.$$
\end{prop}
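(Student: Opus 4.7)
The plan is to exploit the specific structure of $\D\phi$ for a radial stretching in conjunction with the rank-one convexity of $F$. The key pointwise observation, read off directly from \eqref{eq:derivativesphi}, is that at each $x=re_r\in\mb B^n\exc\{0\}$ (with $r=|x|$ and $e_r=x/r$) one has
\[
 \D\phi(x)=\tfrac{\rho(r)}{r}\Id+\Bigl(\dot\rho(r)-\tfrac{\rho(r)}{r}\Bigr) e_r\otimes e_r,
\]
so $\D\phi(x)$ is a rank-one perturbation of the isotropic matrix $(\rho(r)/r)\Id$ in the direction $e_r\otimes e_r$. Moreover, since $\phi=\id$ on $\p\mb B^n$, the divergence theorem gives $\fint_{\mb B^n}\D\phi\,\d x=\Id$, so that the pushforward of Lebesgue measure on $\mb B^n$ under $x\mapsto \D\phi(x)$ is a probability measure on $\R^{n\times n}$ with barycenter $\Id$.

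After mollifying $F$ to a smooth integrand and uniformly approximating $\rho$ by piecewise-linear profiles, one reduces to the case where $F$ is smooth and $\rho$ is linear on each piece of a partition $0=r_0<r_1<\cdots<r_N=1$. The strategy is to build, starting from $\delta_{\Id}$ and splitting iteratively along rank-one directions, a genuine rank-one laminate with barycenter $\Id$ whose support captures the distribution of $\D\phi$. Working shell by shell on the annuli $\{r_{i-1}\leq|x|\leq r_i\}$, one first splits $\Id$ along tangential rank-one directions $\omega\otimes\omega$ in order to recover the scalar matrices $(\rho(r)/r)\Id$, and then further along the radial rank-one direction $e_r\otimes e_r$ in order to recover the full $\D\phi$. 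Applying Jensen's inequality for rank-one convex functions against this laminate yields $F(\Id)\leq\fint_{\mb B^n}F(\D\phi)\,\d x$.

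The inequality for the conjugate stretching $\bar\phi$ is a formal consequence of the first: writing $\bar\phi=\overline\Id\,\phi$ gives $\D\bar\phi=\overline\Id\,\D\phi$ and hence $F(\D\bar\phi)=G(\D\phi)$, where $G(A)\equiv F(\overline\Id A)$. Since $\overline\Id\,(u\otimes v)=(\overline\Id u)\otimes v$ remains rank-one, $G$ is itself rank-one convex, and $G(\Id)=F(\overline\Id)$, so applying the first inequality to $G$ delivers the second.

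The main obstacle is the execution of the lamination step: the rank-one direction $e_r\otimes e_r$ varies continuously with $x$, so the target measure is supported on a continuous family of matrices, and producing a genuine rank-one laminate (as opposed to a generic gradient Young measure, for which one would need the stronger hypothesis of quasiconvexity of $F$ rather than mere rank-one convexity) calls for a careful iterative construction. A direct implementation is given in \cite{Ball1990a}, whose argument we invoke.
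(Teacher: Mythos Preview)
The paper does not actually prove this proposition; it simply refers the reader to \cite[Proposition 3.4]{Ball1990a} (with \cite{Ball1990,Sivaloganathan1988} as background). Your plan ends at exactly the same place, invoking \cite{Ball1990a} for the lamination construction, so in substance you and the paper agree. Your additional framing is correct and useful: the formula $\D\phi(x)=\tfrac{\rho(r)}{r}\Id+(\dot\rho(r)-\tfrac{\rho(r)}{r})\,e_r\otimes e_r$, the observation that the gradient distribution has barycentre $\Id$, and especially your reduction of the $\bar\phi$-inequality to the $\phi$-inequality via the rank-one convex integrand $G(A)\equiv F(\overline\Id\,A)$ are all sound.

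One word of caution about the informal sketch. You write that one ``splits $\Id$ along tangential rank-one directions $\omega\otimes\omega$ in order to recover the scalar matrices $(\rho(r)/r)\Id$'', but scalar matrices differ from $\Id$ by multiples of $\Id$, which has full rank; and for distinct $\omega,\omega'$ the matrices $\Id+c\,\omega\otimes\omega$ and $\Id+c\,\omega'\otimes\omega'$ are generally \emph{not} rank-one connected, so averaging over $\omega\in S^{n-1}$ is not an elementary lamination step. The actual argument in \cite{Ball1990a} is organised differently and avoids this issue. Since you explicitly defer to that reference rather than claiming to carry out the construction yourself, this is not a gap in your proposal---just a place where the heuristic does not faithfully mirror the cited proof.
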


It is well-known, see for instance \cite{Astala2015,Baernstein1997a,Iwaniec2002}, that the Burkholder integrand is \textit{quasiaffine} on a special class of radial stretchings. Although this fact has not been explicitly proved when $n>2$, the proof is identical.

\begin{prop}\label{prop:Bpradialstretchings}
Let $\phi$ be a radial stretching as in \eqref{eq:defphi} such that
\begin{equation}
\label{eq:nonexpanding}
|\dot \rho(r)|\leq \frac{\rho(r)}{r}.
\end{equation}
The integrand $B_p$ is quasiaffine along such radial stretchings: 
$$B_p(\Id)= \fint_{\mb B^n} B(\D \phi) \d x \text{ if } p\leq n,\qquad
B_p(\overline \Id)= \fint_{\mb B^n} B(\D \bar\phi) \d x \text{ if } p\geq n.
$$
\end{prop}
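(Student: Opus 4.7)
The plan is to exploit the non-expanding condition \eqref{eq:nonexpanding} to turn $B_p(\D\phi(x))$ into an explicit radial expression, and then to recognize that expression as a pure divergence on $\mb B^n$. Under \eqref{eq:nonexpanding} the formula \eqref{eq:derivativesphi} gives $|\D\phi| = \rho/r$, while a direct computation with the singular values of $\D\phi$ gives $\det\D\phi = \dot\rho\,(\rho/r)^{n-1}$. Substituting these into \eqref{eq:defBp} and factoring out $(\rho/r)^{p-n}$, one obtains the clean formula
\[
B_p(\D\phi(x)) = \left|1-\tfrac{n}{p}\right|\left(\frac{\rho(r)}{r}\right)^p + \dot\rho(r)\left(\frac{\rho(r)}{r}\right)^{p-1}.
\]

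The main step is then to notice that, for $p\leq n$, this expression equals $(1/p)\,\tp{div}\,V$, where $V(x) \equiv \rho(r)^p\, x/r^p$. Indeed, writing $V = g(r)x$ with $g(r) = \rho^p/r^p$ and using the standard identity $\tp{div}(g(r)x) = n g(r) + r g'(r)$, one finds
\[
\tp{div}\,V = (n-p)\left(\frac{\rho}{r}\right)^p + p\,\dot\rho\left(\frac{\rho}{r}\right)^{p-1},
\]
which, since $|1-n/p| = (n-p)/p$ when $p\leq n$, agrees with $p\, B_p(\D\phi)$. Because $\rho$ is Lipschitz with $\rho(0) = 0$, one has $|V(x)| \leq C^p r$, so $V$ extends continuously to the origin and the divergence theorem applies on $\mb B^n$. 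Using $r = \rho = 1$ and $\nu = x$ on $\mb S^{n-1}$ one computes $V\cdot\nu = 1$ there, whence $\int_{\mb B^n} B_p(\D\phi)\d x = |\mb S^{n-1}|/p$; dividing by $|\mb B^n| = |\mb S^{n-1}|/n$ yields $n/p = B_p(\Id)$, as desired.

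For $p\geq n$ one runs the same argument with $\bar\phi$ in place of $\phi$. Since $|\D\bar\phi| = |\D\phi|$ while $\det\D\bar\phi = -\det\D\phi$, and $|1-n/p| = (p-n)/p$ in this regime, the expression $B_p(\D\bar\phi)$ coincides with $-(1/p)\,\tp{div}\,V$ for the \emph{same} vector field $V$; integration then produces $-n/p = B_p(\overline{\Id})$. The only point that genuinely needs attention is the regularity of $V$ near the origin in order to justify the divergence theorem, but the Lipschitz estimate above suffices; the rest is a clean algebraic identity driven by the specific exponent $|1-n/p|$ in the definition of $B_p$, which is exactly the one for which the radial Burkholder expression is exact.
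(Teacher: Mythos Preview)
Your proof is correct and follows essentially the same approach as the paper. The paper writes $B_p(\D\phi)\,r^{n-1}=\tfrac{1}{p}\tfrac{\d}{\d r}\bigl(r^{n-p}\rho(r)^p\bigr)$ and integrates in spherical coordinates, which is exactly the radial form of your divergence identity $B_p(\D\phi)=\tfrac{1}{p}\,\tp{div}\,V$ with $V(x)=\rho(r)^p x/r^p$; the two computations are identical once one notes $\tp{div}(g(r)x)\,r^{n-1}=\tfrac{\d}{\d r}(r^n g(r))$.
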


\begin{proof}
We just deal with the case $\frac n 2 \leq p\leq n$, as the other one is identical. 
Employing \eqref{eq:derivativesphi}, we see that \eqref{eq:nonexpanding} ensures that $|\D\phi(x)|=\frac{\rho(r)}{r}$, and so we calculate
$$B_p(\D\phi) r^{n-1}=
\left[ \bigg(\frac n p -1\bigg)\frac{\rho(r)^n}{r^n} + \frac{\dot \rho(r) \rho(r)^{n-1}}{r^{n-1}}
\right] \frac{\rho(r)^{p-n}}{r^{p-n}}r^{n-1}
= \frac 1 p \frac{\d}{\d r}\left(r^{n-p} \rho(r)^p\right);$$
hence, integrating in spherical coordinates, 
$$\fint_{\mb B^n} B_p(\D \phi) \d x = \frac n p \int_0^1 \frac{\d}{\d r}\left( r^{n-p} \rho(r)^{p}\right) \d r = \frac{n}{p} = B_p(\tp{Id}),$$
where we also used the boundary conditions $\rho(0)=0$ and $\rho(1)=1$.
\end{proof}

\begin{remark}
Condition \eqref{eq:defphi} is known as a \textit{non-expanding} condition: it ensures that $\phi$ does not increase the conformal modulus of annuli centred at zero. The arguments in \cite{GuerraKochLindberg2020c} show that radial stretchings satisfying \eqref{eq:nonexpanding} minimize the $n$-harmonic energy among all maps with the same Jacobian.
\end{remark}

\begin{proof}[Proof of Theorem \ref{thm:extremality}]
Let $\phi$ be a radial stretching which satisfies \eqref{eq:nonexpanding}. As before we just deal with the case $\frac n 2 \leq p\leq n$, as the other one is identical. 
By Propositions \ref{prop:rcradialstretchings} and \ref{prop:Bpradialstretchings},
$$
\fint_{\mb B^n} B_p(\D \phi) \d x= B_p(\Id)=F(\Id)\leq 
\fint_{\mb B^n} F(\D \phi) \d x  .$$
We now take, for some $\a\in [-1,1]$,
$$\rho(r)\equiv\begin{cases}
\frac{r}{2^{\a-1}} & \text{if } r\leq \frac 1 2,\\
r^\a &\text{if } r\geq \frac 1 2.
\end{cases}$$  In particular, $\rho$ satisfies \eqref{eq:nonexpanding} whenever $|\a|\leq 1$. Since $F(\Id)=B_p(\Id)=1$ by assumption and both $F$ and $B_p$ are positively $p$-homogeneous,
$$\int_{\frac 1 2\mb B^n} F(\D \phi)\d x = \int_{\frac 1 2 \mb B^n} B_p(\D \phi) \d x.$$
Writing $\mb A(r_0,r_1)\equiv \{x\in \R^n:r_0<|x|<r_1\}$, it follows that
$$\int_{\mb A(\frac 1 2, 1)} B_p(\D \phi) \d x 
\leq \int_{\mb A(\frac 1 2, 1)} F(\D \phi) \d x .$$
Using \eqref{eq:derivativesphi} and the fact that $|\a|\leq 1$, we have a.e.\ in $\mb A(\frac 1 2, 1)$ the identities
$$|\D \phi(x)|^n= r^{n(\a-1)}, 
\qquad \det \D \phi(x)=\a r^{n(\a-1)}.$$
Note that $B_p$ satisfies \eqref{eq:symmetry}, i.e.\ there is some function $b_p\colon \R^2\to \R$ such that $B_p(A)=b_p(|A|^n,\det A).$
Since $F$ satisfies \eqref{eq:symmetry} as well, we obtain
$$b_p(1,\a)\int_{\frac 1 2}^1  r^{n(\a-1)+n-1}\d r \leq f(1,\a)\int_{\frac 1 2}^1  r^{n(\a-1)+n-1}\d r 
\quad \implies \quad b_p(1,\a)\leq f(1,\alpha). $$
Thus, varying $\a\in [-1,1]$, we have that $ B_p\leq F$ in the segment $[\Id, \overline{\Id}]$.  Since both $F$ and $B_p$ are positively $p$-homogeneous and satisfy \eqref{eq:symmetry}, their values are determined by the values in the segment $[\Id, \overline \Id]$, and so it follows that $B_p\leq F$ at all points.
\end{proof}

\subsection{Polyconvexity}

In this subsection we investigate the polyconvexity properties of the Burkholder integrand. The next result establishes claims \ref{it:necnonneg} and \ref{it:necp>n} in Remark \ref{rmk:optimality}. 

\begin{prop}\label{prop:nonpc}
If $p>2$, the integrand $B_p\colon \R^{2\times 2}\to \R$ is polyconvex nowhere.

If $p<2$ then $B_p^+\colon \R^{2\times 2}\to \R$ is not polyconvex at points where it does not vanish. 
\end{prop}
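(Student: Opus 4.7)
The plan is to treat both claims by contradiction. In each case I assume polyconvexity at some $A_0$, which furnishes a polyaffine function $L(A)=\alpha+\beta:A+\gamma\det A$ with $L\leq B_p$ (or $L\leq B_p^+$) and $L(A_0)$ equal to the integrand at $A_0$. I then probe $L$ along suitable rays to force a contradiction.

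For the first claim the key fact is that $B_p(\overline{\Id})=-2/p<0$ when $p>2$. Along the ray $\{t\overline{\Id}:t>0\}$, positive $p$-homogeneity gives $B_p(t\overline{\Id})=-(2/p)\,t^p$, while $L(t\overline{\Id})=\alpha+t(\beta:\overline{\Id})-t^2\gamma$ is a polynomial of degree at most two in $t$. Dividing $L\leq B_p$ by $t^p$ and letting $t\to+\infty$ yields $0\leq-2/p$, the desired contradiction. Notice that this argument never uses the touching condition at $A_0$; it actually shows the stronger statement that no polyaffine function lies below $B_p$ on all of $\R^{2\times 2}$.

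For the second claim I restrict to $p\in(1,2)$ — the boundary value $p=1$ must be excluded since a short calculation identifies $B_1=2|A^+|$ with a convex function of the entries of $A$, so it is polyconvex everywhere. Now assume $L\leq B_p^+$ with $L(A_0)=B_p(A_0)>0$. Since $B_p(\overline{\Id})=2/p-2<0$, the open cone $K\equiv\{A\in\R^{2\times 2}:B_p(A)<0\}$ is non-empty, and from the signed-singular-values representation $K\subset\{\det<0\}$. Combining $B_p(-A)=B_p(A)$ (valid in two dimensions) with positive $p$-homogeneity, $B_p^+$ vanishes identically on each line $\R\cdot A$ for $A\in K$, so $L(tA)\leq 0$ for every $t\in\R$ and every $A\in K$.

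From this I extract three successive constraints pinning $L$ down. First, $t\mapsto L(tA)$ is a quadratic in $t$ bounded above by zero, so its leading coefficient $\gamma\det A$ must be non-positive; since $\det A<0$ on $K$, this forces $\gamma\geq 0$. Second, evaluating $L(t\Id)\leq B_p^+(t\Id)=(2/p)t^p$ and letting $t\to+\infty$ forces $\gamma\leq 0$, because $p<2$ makes $t^2$ dominate $t^p$ at infinity. Hence $\gamma=0$ and $L$ is affine. Third, returning to $A\in K$ with $\gamma=0$, the linear-in-$t$ function $\alpha+t(\beta:A)$ is bounded above by zero on all of $\R$, forcing $\beta:A=0$ for every $A\in K$; since $K$ is open, $\beta$ must vanish as a linear functional on $\R^{2\times 2}$. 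Then $L\equiv\alpha\leq 0$ contradicts $L(A_0)=B_p(A_0)>0$. The main technical point is establishing the structure of $K$; the delicate step is the second constraint above, where the hypothesis $p<2$ is essential.
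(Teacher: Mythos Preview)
Your proof is correct, and it takes a genuinely different route from the paper's.

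For the first claim, the paper simply cites \cite[Corollary 5.9]{Dacorogna2007}; your argument is the elementary content of that citation, made explicit: any polyaffine minorant of $B_p$ would have at most quadratic growth, while $B_p(t\overline\Id)=-(2/p)t^p$ decays to $-\infty$ at a super-quadratic rate when $p>2$. So here the two proofs are morally the same, yours just being self-contained.

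For the second claim the approaches differ more substantially. The paper again invokes \cite[Corollary 5.9]{Dacorogna2007} to say that, under sub-quadratic growth, polyconvexity at a point forces \emph{convexity} at that point; it then checks by hand that $B_p^+$ fails convexity at $\Id$ via the three test points $\textup{diag}(2,0)$, $\textup{diag}(0,2)$, $\Id$, and finally transports this failure to every point where $B_p^+>0$ using an affine-segment argument together with isotropy and homogeneity. Your argument bypasses the reduction to convexity entirely: you exploit that $B_p^+$ vanishes on the open double cone $K=\{B_p<0\}\subset\{\det<0\}$, pin down $\gamma=0$ by comparing the $t^2$ and $t^p$ rates along $t\,\Id$ (this is where $p<2$ enters), then kill $\beta$ because it must vanish on the open set $K$, and finally reach $\alpha\leq 0<B_p(A_0)$. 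This handles all non-vanishing points in one stroke, with no appeal to symmetry or to an external lemma, which is a genuine gain in directness. You are also right to flag the boundary case $p=1$: since $B_1=2|A^+|$ is convex in two dimensions, the statement is vacuous there, and indeed the paper's test inequality $2^p(2/p-1)<2/p$ degenerates to equality at $p=1$.
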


\begin{proof}
The first part is clear and follows from \cite[Corollary 5.9]{Dacorogna2007}.
The second part follows from the same result, which shows that $B_p^+$ is convex at all points where it is polyconvex. However, $B_p^+$ is not convex at any point where it does not vanish. To see this, we first note that it is not convex at $\Id$, since
$$\frac 1 2 B_p^+(\tp{diag}(2,0))+\frac12 B_p^+(\tp{diag}(0,2))
= 2^p \Big(\frac 2 p -1\Big)< \frac 2 p=B_p^+(\tp{diag}(1,1)).$$ 
Take $A(t)\equiv t \Id + (1-t)\overline \Id$ and note that $t\mapsto  B_p^+(A(t))$ is affine for $t\in [t_0,1]$, where $t_0\equiv 1-\frac{1}{p}$. The convex envelope of $B_p^+$ vanishes at $A(t_0)$, since $B_p^+(A(t_0))=0$, and it is strictly below $B_p^+$ at $A(1)=\Id$. It follows that the convex envelope of $B_p^+$ must be strictly below $B_p^+$ at all points in the segment $(A(t_0),\Id]$. By isotropy and homogeneity, the conclusion follows.
\end{proof}

\begin{remark}
The second statement in Proposition \ref{prop:nonpc} also holds when $n=3$: for $p<3$, $B_p^+\colon \R^{3\times 3}\to \R$ is polyconvex only at points where it vanishes. Indeed, $B_p^+=0$ in $\tp{CO}^-(3)$ and hence, if $F\geq 0$ denotes the polyconvex envelope of $B_p^+$, we must also have $F=0$ in $\tp{CO}^-(3)$. However, by the results of Yan \cite{Yan1997} any polyconvex integrand vanishing on $\tp{CO}^-(3)$ with growth less than $3$ must vanish everywhere, and hence $F=0$ at all points.
\end{remark}

Recall that the case $p=n$ is trivial, since $B_n^+=\det^+$.
Thus Proposition \ref{prop:nonpc} only leaves unanswered the question of whether $B_p^+$ is polyconvex for some $p>n$. The next two results, which correspond to Propositions \ref{prop:Bpn=2} and \ref{prop:Bpn=3}, show that the answer is positive for $n=2$ and negative for $n=3$.

\begin{prop}[$n=2$]\label{prop:pcn=2}
The integrand $B_p^+\colon \R^{2\times2}\to [0,+\infty)$ is polyconvex for $p>2$.
\end{prop}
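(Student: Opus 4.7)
My plan is to prove polyconvexity by constructing, for every $A_0\in\R^{2\times2}$, an affine-in-minors function $L(A)=c+\langle M,A\rangle+\gamma\det A$ with $L\le B_p^+$ on $\R^{2\times2}$ and $L(A_0)=B_p^+(A_0)$; by the isotropy and positive $p$-homogeneity of $B_p^+$ it is enough to exhibit one such $L$ supporting at $\Id$, since rotations and rescalings of it will then support at every $A_0$ with $B_p^+(A_0)>0$, while the trivial function $L\equiv 0$ supports wherever $B_p^+$ vanishes.

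The candidate, obtained by computing the tangent plane to $B_p$ at a generic diagonal matrix inside its smooth region, is
\[ L(A)\equiv(p-2)(A_{11}-A_{22})+(p-1)\det A-\frac{(p-1)(p-2)}{p}. \]
A short check gives $L(\Id)=2(p-1)/p=B_p^+(\Id)$, and in fact $L$ coincides with $B_p^+$ along the whole segment $\{\tp{diag}(1,t):t\in[-(p-2)/p,1]\}$. The bulk of the work is to verify $L\le B_p^+$ globally. Writing $A=U\tp{diag}(\lambda_1,\lambda_2)V^T$ in signed SVD with $U,V\in\tp{SO}(2)$ and $\lambda_1\geq|\lambda_2|\geq 0$, one has $A_{11}-A_{22}=\tp{tr}(\overline{\Id}\,A)=\tp{tr}(Q\,\tp{diag}(\lambda_1,\lambda_2))$ for the reflection $Q=V^T\overline{\Id}\,U$, which gives $A_{11}-A_{22}\le\lambda_1-\lambda_2$. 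Using this bound together with $\det A=\lambda_1\lambda_2$ and $B_p^+(A)=\lambda_1^{p-1}[\tfrac{p-2}{p}\lambda_1+\lambda_2]^+$ reduces the task to the two-variable inequality
\[ (p-2)(\lambda_1-\lambda_2)+(p-1)\lambda_1\lambda_2-\frac{(p-1)(p-2)}{p}\le\lambda_1^{p-1}\bigl[\tfrac{p-2}{p}\lambda_1+\lambda_2\bigr]^+ \]
on $\{\lambda_1\geq|\lambda_2|\geq 0\}$.

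For this I split on the sign of the bracket. Where it is $\geq 0$, let $\Phi$ denote the difference RHS$-$LHS; one computes $\partial_{\lambda_2}\Phi=\lambda_1^{p-1}-(p-1)\lambda_1+(p-2)$, a one-variable function with a double root at $\lambda_1=1$ which is everywhere non-negative (its second derivative is $(p-1)(p-2)\lambda_1^{p-3}>0$), so $\Phi$ is non-decreasing in $\lambda_2$ and it suffices to evaluate at the lower boundary $\lambda_2=-(p-2)\lambda_1/p$, where direct computation yields $\Phi=\tfrac{(p-1)(p-2)}{p}(\lambda_1-1)^2\geq 0$. In the complementary region the RHS vanishes and the LHS is linear in $\lambda_2$, attaining its maximum on the admissible set either on the boundary $\lambda_2=-(p-2)\lambda_1/p$ or at $\lambda_2=-\lambda_1$, both of which are checked by one-variable calculus to be $\leq 0$. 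The chief obstacle is identifying the correct $L$; once it is in hand, the proof goes through because $\partial_{\lambda_2}\Phi$ depends only on $\lambda_1$ and vanishes to second order at $\lambda_1=1$, which is exactly what forces the boundary evaluation to be a perfect square.
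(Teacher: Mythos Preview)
Your proof is correct, but the initial reduction is misstated: it is not true that supporting at $\Id$ alone suffices, since the orbit of $\Id$ under $A\mapsto tQAR$ ($t>0$, $Q,R\in\tp{SO}(2)$) is only $\tp{CO}^+(2)\setminus\{0\}$, which is strictly smaller than $\{B_p^+>0\}$. What actually makes your argument work is the observation you record immediately afterwards, that the single $L$ touches $B_p^+$ along the whole segment $\{\tp{diag}(1,t):t\in[-(p-2)/p,1]\}$; the orbit of \emph{this segment} under rotations and positive scalings is precisely the closure of $\{B_p^+>0\}\setminus\{0\}$, so together with $L\equiv 0$ on the zero set you do get a supporting affine-in-minors function at every point. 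Once the reduction is phrased this way, your two-variable calculus goes through as written.

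Your route is genuinely different from the paper's. The paper does not guess a supporting function; instead it changes variables to $(t,d)=(|A^-|,\det A)$, checks that the resulting function $h(t,d)$ has positive semi-definite Hessian on its domain (in fact $\det\D^2h=0$, so the Hessian has rank one), extends $h^+$ convexly to the half-plane, and then composes with the convex map $A\mapsto|A^-|$ (special to $n=2$) to obtain polyconvexity. This is more structural: the degeneracy $\det\D^2h=0$ is precisely the reason a single affine-in-minors function can touch along an entire segment, and the convexity of $h^+$ yields supporting planes at all points simultaneously. Your approach is more explicit and elementary, relying on the pleasant fact that $\partial_{\lambda_2}\Phi$ depends only on $\lambda_1$ and vanishes to second order at $\lambda_1=1$---which is the same rank-one degeneracy seen in singular-value coordinates. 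The paper's method reveals the underlying convex structure and adapts more readily to related integrands; yours gives the supporting hyperplane in closed form.
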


\begin{proof}
We roughly follow the strategy in \cite{Iwaniec1996}. The crucial point is to perform an appropriate change of variables, which in our case is
$$\begin{cases}
t_A\equiv |A^-|\\
d_A\equiv \det A
\end{cases}
\implies |A^+|=\sqrt{t_A^2+d_A},$$
where we used \eqref{eq:conformalcoords}.
We introduce the function $$h(t,d)\equiv\left(\Big(1-\frac 2 p\Big)(t+\sqrt{t^2+d})^2 + d\right)(t+\sqrt{t^2+d})^{p-2}$$
which satisfies $h(t_A,d_A)=B_p(A)$ and is defined on $S\equiv \{(t,d)\in \R^2: t\geq 0, t^2+d\geq 0\}$. 
We now split the proof into three steps.

\textbf{Step 1:} $h\in C^2$ and $\D^2 h$ is positive semi-definite in the interior of $S$. The first part is clear and the second part follows from elementary calculations. We compute
\begin{align*}
& \p_{tt} h = 2 \frac{(p-1)(p-2)}{\sqrt{t^2+d}}(t+\sqrt{t^2+d})^{p-1},\\
& \p_{td} h = \frac{(p-1)(p-2)}{\sqrt{t^2+d}}(t+\sqrt{t^2+d})^{p-2},\\
&\p_{dd} h =\frac 1 2\frac{(p-1)(p-2)}{ \sqrt{t^2+d}} (t+\sqrt{t^2+d})^{p-3}.
\end{align*}
We see that $\p_{tt}h\geq 0$ while $\det \D^2 h =0$, and so the claim follows.

\textbf{Step 2:} define a function 
$h^+\colon [0,+\infty)\times \R \to [0,+\infty)$ by
$$h^+(t,d)\equiv \begin{cases}
h(t,d) & \text{if } (t,d)\in S' \text{ and } h(t,d)\geq 0,\\
0 &\text{otherwise},
\end{cases}
\quad \text{where } S'\equiv \{(t,d)\in S:h(t,d)\geq 0\}.$$
Note that $S'$ is properly contained in $S$: explicitly,
$$S'= \Big\{(t,d):t\geq 0, \frac{p(p-2)}{(p-1)^2} t^2+d\geq 0 \Big\},\qquad \frac{p(p-2)}{(p-1)^2}\leq 1.$$
This identity seems rather difficult to verify directly from the definition of $h$, but it can be deduced as follows. Since $p>2$, we see from \eqref{eq:defBp} and \eqref{eq:conformalcoords} that
$$B_p(A)=\frac{2}{p}\left((p-1)|A^+|-|A^-|\right) |A|^{p-1}.$$
Thus $B_p(A)\geq 0$ if and only if $(p-1)|A^+|\geq |A^-|$. The expression for $S'$ now follows by recalling the definitions of $h$ and $(t_A,d_A)$.

In this step we show that $h^+$ is convex. The argument is somewhat similar to the one in \cite[Lemma 3.1]{Sverak1990}.
Consider a segment $[a,b]\subset [0,+\infty)\times \R$: we want to show that $h^+$ is convex along $[a,b]$. Consider the set $I=\{x\in [a,b]: x\not \in S'\}$. Clearly $I$ is relatively open in $[a,b]$, $h^+\geq 0$ on $[a,b]\exc I$ and $h^+=0$ on $\p I$. Note that $[a,b]\exc I$ is either a segment or the union of two disjoint segments, and by the previous step $h^+$ is convex in each component of $[a,b]\exc I$. It follows that $h^+$ is convex on $[a,b]$.

\textbf{Step 3:} conclusion. 
Fix $A,B\in \R^{2\times 2}$. By Step 2 there are numbers $\tau_B, \delta_B\in \R$ such that
\begin{align}
\begin{split}
\label{eq:Bppc}
B_p^+(A)-B_p^+(B) & = h^+(t_A,d_A)-h^+(t_B,d_B)\\
&\geq |B|^{p-2}\left(\tau_B (|A^-|-|B^-|)+\delta_B(\det A-\det B)\right).
\end{split}
\end{align}
Since $n=2$, $A\mapsto |A^-|$ is a convex function and the conclusion follows.
\end{proof}

Although the integrands $A\mapsto |A^\pm|$ are convex when $n=2$, they are not even polyconvex when $n>2$: indeed, these integrands are $\frac n2$-homogeneous, c.f.\ \eqref{eq:normconformalpart}, and vanish exactly on $\textup{CO}^\pm(n)$, and so the claim follows from the results of \cite{Yan1997}. In particular, establishing an inequality similar to \eqref{eq:Bppc} in higher dimensions would not imply that $B_p^+$ is polyconvex. In fact, the analogue of Proposition \ref{prop:pcn=2} is already false when $n=3$.

\begin{prop}[$n=3$]
The integrand $B_p^+\colon \R^{3\times 3}\to [0,+\infty)$ is not polyconvex for $p> 3$.
\end{prop}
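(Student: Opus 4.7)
The plan is to find a matrix $A_0\in\R^{3\times3}$ at which $B_p^+(A_0)>0$, together with positive weights $\lambda_i$ with $\sum_i\lambda_i=1$ and matrices $A_i$ where $B_p^+$ vanishes, satisfying the minors relations
\[
\sum_i\lambda_iA_i=A_0,\quad \sum_i\lambda_i\tp{cof}(A_i)=\tp{cof}(A_0),\quad \sum_i\lambda_i\det A_i=\det A_0.
\]
If such a decomposition exists, then $B_p^+$ cannot be polyconvex at $A_0$, since
\[
B_p^+(A_0)>0=\sum_i\lambda_iB_p^+(A_i).
\]

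The natural candidate for $A_0$ is $\tp{diag}(1,1,-\epsilon)$ with a suitable $\epsilon\in(0,(p-3)/p)$, so that $B_p^+(A_0)=(p-3)/p-\epsilon>0$. The key feature distinguishing $n=3$ from the two-dimensional case of Proposition~\ref{prop:pcn=2} is that $\{B_p^+=0\}=\tp{QCO}^-(3,p/(p-3))$ strictly contains the anti-conformal cone $\tp{CO}^-(3)$ for $p>3$; this is the extra room I would aim to exploit. Parametrising the anti-conformal pieces as $A_i=t_iR_i$ with $R_i\in\tp{O}(3)$, $\det R_i=-1$, the identities $\tp{cof}(A_i)=-t_iA_i$ and $\det A_i=-t_i^3$ turn the three constraints into a matricial truncated moment problem: $\sum\lambda_it_iR_i=A_0$, $\sum\lambda_it_i^2R_i=-\tp{cof}(A_0)$, $\sum\lambda_it_i^3=\epsilon$. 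A symmetric ansatz dictated by the $\tp{O}(2)$-stabiliser of $A_0$ in the first two coordinates would combine diagonal improper orthogonal matrices, possibly together with a multiple of $-\Id\in\tp{CO}^-$.

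The main obstacle is that restricting to $\tp{CO}^-(3)$ produces an overdetermined system whose diagonal solutions collapse to rank-one perturbations and thus reveal only rank-one convexity. To overcome this, I would enlarge the support to the strictly larger cone $\tp{QCO}^-(3,p/(p-3))$: matrices of the form $\tp{diag}(a,a,-b)$ with $b\in[a(p-3)/p,a\sqrt{p/(p-3)}]$ still satisfy $B_p^+=0$ and furnish the additional parameters required to close the system with positive weights. As an alternative, more indirect route, one may appeal to the rigidity results of Yan~\cite{Yan1997}: the polyconvex envelope $(B_p^+)^{pc}$ is a non-negative polyconvex function that vanishes on the whole quasiconformal cone $\tp{QCO}^-(3,p/(p-3))$, whereas Yan's theory constrains such polyconvex functions severely once the vanishing set exceeds $\tp{CO}^-(3)$, which should force $(B_p^+)^{pc}<B_p^+$ at some matrix and so $B_p^+$ is not polyconvex.
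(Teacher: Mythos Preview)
Your proposal is a plan rather than a proof: neither route is carried to a conclusion, and in fact neither closes as stated.

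For the decomposition route, the symmetric ansatz you propose provably fails. Any matrix of the form $\tp{diag}(a,a,-b)$, or a multiple of $-\Id$, has the shape $\tp{diag}(a_i,a_i,c_i)$. For $A_0=\tp{diag}(1,1,-\epsilon)$ the matrix relation gives $\sum_i\lambda_i a_i=1$, while the $(3,3)$-entry of the cofactor relation gives $\sum_i\lambda_i a_i^2=1$; since $\sum_i\lambda_i=1$, the variance of the $a_i$ vanishes and so $a_i\equiv1$. The remaining constraints all collapse to $\sum_i\lambda_i c_i=-\epsilon$, but $B_p^+(\tp{diag}(1,1,c))=0$ forces $c\in[-\sqrt{p/(p-3)},\,-(p-3)/p]$, whence $\sum_i\lambda_i c_i\le -(p-3)/p<-\epsilon$. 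Thus enlarging from $\tp{CO}^-(3)$ to your quasiconformal diagonal family buys nothing; to make the minors approach work one must leave this symmetry class, and you give no indication how. For the Yan route, the rigidity results you cite concern polyconvex integrands of growth strictly below $n$, whereas here $p>3=n$; indeed $(\det^+)^{p/3}$ is polyconvex, positively $p$-homogeneous, and vanishes on the entire half-space $\{\det\le0\}\supset\tp{QCO}^-(3,K)$, so no rigidity is available at this growth.

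The paper takes the dual approach and avoids constructing any decomposition. It works at $A_0=\tp{diag}(1,0,0)$ and assumes, for a contradiction, that an affine function $G$ of the minors supports $B_p^+$ from below there. Restricting to diagonal matrices, $G$ depends on seven coefficients; the touching conditions $B_p^+=G$ along the one-parameter families $\tp{diag}(1,y,0)$, $\tp{diag}(1,0,z)$ and $\tp{diag}(1,y,y)$ (where $B_p^+$ is explicit and smooth) force equality of first derivatives and pin down all seven coefficients uniquely. With $G$ now completely determined, a direct evaluation at $\tp{diag}(0,y,-y)$ for $y$ slightly below $1$ yields $G>B_p^+$, the desired contradiction. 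This supporting-function method is both shorter and more robust than hunting for an explicit minors decomposition.
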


\begin{proof}
We will show that $B_p^+$ is not polyconvex at the point $\tp{diag}(1,0,0)$. Suppose, on the contrary, that $B_p^+$ it is polyconvex at that point, which is to say that there are constants $c_i$, $i=1,\dots, 7$, such that 
$F(x,y,z)\geq G(x,y,z),$
where
\begin{align*}
&F(x,y,z)\equiv B_p^+(\tp{diag}(x,y,z))-B_p^+(\tp{diag}(1,0,0)),\\
&G(x,y,z)\equiv c_1(x-1) + c_2 y+c_3 z + c_4(x-1)y+c_5(x-1)z + c_6 y z+c_7 (x-1) y z.
\end{align*}
We also note that $F$ is Lipschitz continuous and, by Proposition \ref{prop:Bpprops}, it is differentiable at any point $(x_1,x_2,x_3)\in \R^3$ for which there is some $i$ such that $|x_i|>\max_{j\neq i} |x_j|$.
Since $F(1,0,0)=G(1,0,0)=0$ and $F \geq G$, we must have
\begin{equation}
\label{eq:c1-c3}
\nabla F(1,0,0)=\nabla G(1,0,0) \iff  (p-3,0,0)=(c_1,c_2,c_3).
\end{equation}
We calculate, for $0<y,z<1$,
$$F(1,y,0)=F(1,0,z)=0=G(1,y,0)=G(1,0,z).$$
Note that our restriction on $y,z$ ensures that $F$ is differentiable at $(1,y,0)$ and $(1,0,z)$ and so, since $F\geq G$, we must have
$$\begin{cases}
\nabla F(1,y,0)=\nabla G(1,y,0)\\
\nabla F(1,0,z)=\nabla G(1,0,z)\\
\end{cases}$$
which, using \eqref{eq:c1-c3} above, yields
$$
\begin{cases}
(p-3,0,y)=(p-3+c_4 y,0,c_6 y)\\
(p-3,z,0)=(p-3+c_5 z,c_6 z,0)\\
\end{cases} \iff
(0,0,1)=(c_4,c_5,c_6).$$
In other words, combining \eqref{eq:c1-c3} with this last equation, we actually  have
$$G(x,y,z)=(p-3)(x-1)+y z + c_7 (x-1)y z$$
for some constant $c_7\in \R$. We now calculate
$$G(1,y,y)=y^2, \qquad F(1,y,y)=y^2 \quad \text{if } |y|\leq 1.$$
So we must have, for $|y|<1$, 
$$p-3+c_7y^2 =\p_x G(1,y,y)=\p_x F(1,y,y)=p-3+(p-2)y^2 \iff c_7 = p-2.$$
Finally we evaluate, for $y\geq 0$,
$$G(0,y,-y)=(p-3)(y^2-1),\qquad F(0,y,-y)=(p-3)\frac{(y^p-1)}{p}.$$
It is easy to see that $G(0,y,-y)>F(0,y,-y)$ for $y<1$ sufficiently close to 1: indeed, note that $G(0,1,-1)=F(0,1,-1)=0$, while
$$\left.\frac{\d G(0,y,-y)} {\d y}\right|_{y=1}=2(p-3)>p-3= \left.\frac{\d F(0,y,-y)}{\d y}\right|_{y=1}$$
since $p>3$. This yields the desired contradiction.
\end{proof}

We conclude this paper by deriving an interesting consequence of Proposition \ref{prop:pcn=2}, following a strategy outlined in \cite{Astala2012,Iwaniec2002}. We consider the polyconvex integrands $B_p^+\colon \R^{2\times 2}\to [0,+\infty)$ and we differentiate this family of integrands in $p$ at $p=2$. To be precise, let us fix $A\in \R^{2\times 2}$ such that $\det A>0$; note that, for such $A$, we have $B_p(A)>0$. Since $B_2=\det$, we have
$$
B_\sharp(A)\equiv 
\lim_{p\searrow 2} \frac{B_p^+(A)-\det A}{p-2}=
\left.\frac{\d B_p(A)}{\d p}\right|_{p=2}=\frac 1 2\left(|A|^2+\det A \log(|A|^2)\right).
$$
Since the determinant is a null-Lagrangian, and since $B_p^+$ is polyconvex for every $p\geq 2$, it follows that $B_\sharp\colon \R^{2\times2}_+\to \R$ is also polyconvex. We now consider the involution $\widehat{\cdot}$ acting on integrands $F\colon \R^{n\times n}_+\to \R$ through
$$\widehat{F}(A)\equiv F(A^{-1})\det A.$$
It is well-known that this operation preserves polyconvexity, quasiconvexity and rank-one convexity, c.f.\ \cite[Theorem 2.6]{Ball1977b}. Since $n=2$, $|A^{-1}|=|A|/\det A$, and so
$$\widehat{B_\sharp}(A)=\frac12 \Big(\frac{|A|^2}{\det A} + \log \frac{|A|^2}{(\det A)^2}\Big)=
\frac12 \Big(\frac{|A|^2}{\det A} + \log \frac{|A|^2}{(\det A)}- \log \det A\Big).$$
In particular, we obtain:

\begin{cor}
The integrand $\widehat{B_\sharp}\colon \R^{2\times 2}_+\to \R$ is polyconvex.
\end{cor}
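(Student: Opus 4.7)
The computation preceding the statement already expresses $\widehat{B_\sharp}$ in closed form, so the task reduces to verifying that each operation used to construct it preserves polyconvexity. The plan has three steps.

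First, observe that for every $p>2$ the integrand
$$F_p(A)\equiv \frac{B_p^+(A)-\det A}{p-2}, \qquad A\in \R^{2\times2},$$
is polyconvex. Indeed, $B_p^+$ is polyconvex on $\R^{2\times 2}$ by Proposition \ref{prop:pcn=2}, and $\det$ is a minor, hence polyaffine; subtracting a polyaffine integrand from a polyconvex one and dividing by the positive constant $p-2$ preserves polyconvexity, since if $B_p^+=G_p\circ M$ with $G_p$ convex on $\R^{5}$, then $F_p=(G_p-\langle c,\cdot\rangle)\circ M/(p-2)$ with the bracketed function still convex.

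Second, pass to the limit $p\searrow 2$. For every $A\in \R^{2\times 2}_+$ the computation above gives $F_p(A)\to B_\sharp(A)$ pointwise. Polyconvexity is preserved under pointwise limits: the equivalent characterization of polyconvexity via Jensen's inequality against probability measures satisfying the minors relations (see \cite{Dacorogna2007}) is stable under taking pointwise limits of the integrand. Therefore $B_\sharp$ is polyconvex on $\R^{2\times 2}_+$. (On matrices with $\det A\leq 0$ the limit need not be finite, but this is irrelevant for the next step.)

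Third, apply the involution $\widehat{\cdot}$. By \cite[Theorem 2.6]{Ball1977b}, $F\mapsto \widehat F$ maps polyconvex integrands on $\R^{n\times n}_+$ to polyconvex integrands on $\R^{n\times n}_+$. Applying this to $B_\sharp$ yields that $\widehat{B_\sharp}$ is polyconvex on $\R^{2\times 2}_+$, and the explicit formula follows from $|A^{-1}|=|A|/\det A$ when $n=2$, as computed before the statement. The only delicate point in the argument is the stability of polyconvexity under pointwise limits, which is where we rely on the measure-theoretic characterization rather than trying to pass to the limit at the level of the convex representatives $G_p$; all other steps are routine.
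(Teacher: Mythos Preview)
Your proof is correct and follows exactly the same three-step route as the paper: polyconvexity of $(B_p^+-\det)/(p-2)$ from Proposition~\ref{prop:pcn=2}, stability of polyconvexity under the pointwise limit $p\searrow 2$ to obtain polyconvexity of $B_\sharp$ on $\R^{2\times 2}_+$, and then the involution $\widehat{\cdot}$ via \cite[Theorem 2.6]{Ball1977b}. If anything you are slightly more explicit than the paper on the limit step, invoking the Jensen-type characterization to justify stability, whereas the paper simply asserts it.
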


We note that, in \cite{Voss2021}, the authors consider an integrand $W^-_\tp{magic}$ which equals $2\widehat{B_\sharp}$. In particular, the above corollary is equivalent to \cite[Lemma 5.1]{Voss2021}.

\let\oldthebibliography\thebibliography
\let\endoldthebibliography\endthebibliography
\renewenvironment{thebibliography}[1]{
  \begin{oldthebibliography}{#1}
    \setlength{\itemsep}{0.5pt}
    \setlength{\parskip}{0.5pt}
}
{
  \end{oldthebibliography}
}

{\small
\bibliographystyle{abbrv-andre}
\bibliography{/scratch/aguerra/projects/library.bib}
}

\end{document}